\newtheorem{problem}{Problem}
\newtheorem{definition}{Definition}
\newtheorem{corollary}{Corollary}
\newtheorem{lemma}{Lemma}
\newtheorem{theorem}{Theorem}
\title{\LARGE \bf
Planning for Package Deliveries in Risky Environments Over Multiple Epochs
}
\author{Blake Wilson, Jeffrey Hudack and Shreyas Sundaram
\thanks{B. Wilson, and S. Sundaram are with the Elmore Family School of Electrical and Computer Engineering,
        Purdue University, West Lafayette, IN 47907 USA
        {\tt\small wilso692@purdue.edu, ye159@purdue.edu,sundara2@purdue.edu}}%
\thanks{J. Hudack is with the Air Force Research Lab, Rome, NY 13441, USA
        {\tt\small jeffrey.hudack@us.af.mil}}%
}
\begin{document}

\maketitle
\thispagestyle{empty}
\pagestyle{empty}

\begin{abstract}

We study a risk-aware robot planning problem where a dispatcher must construct a package delivery plan that maximizes the expected reward for a robot delivering packages across multiple epochs. Each package has an associated reward for delivery and a risk of failure. If the robot fails while delivering a package, no future packages can be delivered and the cost of replacing the robot is incurred. The package delivery plan takes place over the course of either a finite or an infinite number of epochs, denoted as the finite horizon problem and infinite horizon problem, respectively. The dispatcher has to weigh the risk and reward of delivering packages during any given epoch against the potential loss of any future epoch's reward. By using the ratio between a package's reward and its risk of failure, we prove an optimal, greedy solution to both the infinite and finite horizon problems. The finite horizon problem can be solved optimally in $O(K n\log n)$ time where $K$ is the number of epochs and $n$ is the number of packages. We show an isomorphism between the infinite horizon problem and Markov Decision Processes to prove an optimal $O(n)$ time algorithm for the infinite horizon problem.

\end{abstract}



\section{INTRODUCTION}
Robotic platforms (such as drones)  hold incredible promise for tasks such as reconnaissance, search and rescue, and delivery of essential items in dangerous environments (e.g., during natural disasters, or in battlefields). In recent decades, significant focus has been put toward path planning for robots by solving graph theoretic problems \cite{LAPORTE1992345}. A common goal for these problems is to collect rewards (e.g., information or objects) by visiting physical locations. These problems are known as Orienteering Problems (OP) and many variants are applicable to realistic engineering problems, such as environmental inspection \cite{Katrasnik_Power_2009}, multi-agent search \cite{2017_Jorgensen_Pavone_Matroid_Team_Orienteering}, and sensor data collection \cite{Hudack2016}. Typically, variants of OP are NP-Hard, and  must be solved via heuristics like Monte Carlo methods, and linear programming \cite{2016_03_15_Gunawan_Orienteering, LAPORTE1992345, LAPORTE1990193, Tsiligirides1984}. Up until recently, one aspect often missing from these formulations is that robots come at a cost, and they face the risk of failure when deployed in dangerous environments. Therefore, to maximize the effectiveness of a limited supply of robots in such settings, one needs to weigh the reward of achieving a given task against the risk of robot failure and cost of replacement. 

To account for this, one can construct a variant of OP where each edge in the graph has an associated risk of failure. Then, rather than finding a tour which maximizes the collected reward, the goal becomes to find the tour that maximizes the expected reward, i.e., risk-aware OP \cite{2017_Jorgensen_Pavone_Matroid_Team_Orienteering,Schwager2017}. These kinds of problems are crucial for logistic operations in regions with adversaries, such as maritime transportation \cite{VANEK2013157}. Recently, it was shown that the solution to some team risk-aware OP problems have a matroid structure which allow for matroid optimization techniques \cite{2017_Jorgensen_Pavone_Matroid_Team_Orienteering}. However, if the problem introduces some cyclic behavior like a return depot, then the cycles can break the matroid structure and other optimization techniques are required \cite{Prasad_A_2019_Risk_Aware}. The return depot formulation encompasses many real-world applications like search and rescue, sensor data collection, and package delivery. 


In this paper, we focus on a particular OP where a single agent delivers packages (e.g., containing aid or other essential items) from a depot to a set of locations. Delivering a package to a given location earns a reward (specific to that package and location), but also has a (location-specific) risk of failure for the agent that is delivering the package.  Furthermore, packages are desired to be delivered to the various locations repeatedly (i.e., over a set of {\it epochs} representing, for example, hours, days, or weeks). Each epoch has the same set of packages as any other epoch. Multi-epoch missions have been explored in the context of intermittent deployment \cite{2019JunCoupledTasks,2020JunIntermittent}. However, they have not been explored in the context of risk-aware task allocation before. For each epoch, a dispatcher determines the order of packages for the agent in order to maximize the expected reward from delivering packages while accounting for risk of failure along the way. The key challenge in this setting is to identify a rigorous strategy for dispatching the agent that accounts for all of these various features (drone costs, task-specific rewards, task-specific risks of failure, and the need to plan over multiple epochs). Additionally, we will consider both a finite horizon and an infinite horizon setting where the number of epochs is finite and infinite, respectively. For the finite horizon case, we prove an optimal $O(n \log n)$ time algorithm where $n$ is the number of packages. For the infinite horizon case, we map the problem to an isomorphic Markov Decision Process (MDP) and prove the optimal solution can be found in $O(n)$ time.

Our work differs from the previously mentioned studies in two main ways. Firstly, insofar as we are aware, maximizing the expected reward across multiple epochs has not been considered in any previous risk-aware OP variant. Secondly, while the single-agent variant we consider in this paper does form a matroid, our greedy solution is optimal without the need of matroid heuristics. 





\section{Problem Formulation}
We consider time as being measured by a set of $K$ epochs $\{1, 2, \ldots, K\}$, where each epoch can represent an hour, a day, a week, etc., as appropriate for the scenario.  There is a depot $t_0$ which contains a set of packages $\mathcal{T} = \{t_1, t_2, ..., t_n\}$ at the start of each epoch, where each package, $t_i \in \mathcal{T}$ is requested to be delivered to a certain location.  We assume that the package for each location is unique (i.e., a package for one location cannot be delivered to another location), and thus we use $t_i$ to denote both the package and the destination location. Furthermore, we assume that the set of packages $\mathcal{T}$ is replenished at the depot at the start of each epoch.  Each package $t_i \in \mathcal{T}$ has an associated reward  $r_i \in \mathbb{R}_{+}$ if it is successfully delivered to its target location in a given epoch. An agent (e.g., drone) is available at the depot at the first epoch to deliver the packages to their desired locations.  Since the agent may fail during an epoch in our problem formulation, for each $h \in \{1,2,...,K\}$ we define $\alpha_h = 1$, if the agent survives to epoch $h$ and $\alpha_h = 0$ otherwise. The agent can carry at most 1 package at a time; after it has delivered the package it is carrying, it must return to the depot to pick up another package. For each package $t_j$ that the agent attempts to deliver, there is a probability that the agent will fail either en route to the target location, or on the way back to the depot. We assume that the events representing successful agent traversal of each leg of the trip are independent and have equal probability, given by $\rho_j$ for some $\rho_j \in [0,1]$. Thus the probability that the agent successfully delivers package $t_j$ and returns to the depot is given by $\rho_j^2$.  We also assume that the events denoting successful agent traversal for different packages are independent.  If the agent fails while delivering a package, a cost of $\theta \in \mathbb{R}_{\ge 0}$ is incurred and the agent cannot deliver any more packages for the current or any future epoch.

 During each epoch that the agent is alive it executes an assigned delivery plan in some order specified by the dispatcher. Due to the fact that the agent can only carry one package at a time, a package delivery plan for epoch $h$ is represented by an ordered set of cycles $\mathcal{C}_{h} = \{ W_{h,1}, W_{h,2}, ..., W_{h,q_{h}} \}$ where each package delivery $W_{h,j} \in \mathcal{C}_{h}$ has the form $W_j = \{t_0, t_{h,j}, t_0\}$, with $t_{h,j} \in \mathcal{T}$ representing the $j$-th package being assigned for delivery by the agent in epoch $h$. 

  This ordered set may not contain every package delivery available for the epoch if the dispatcher considers some packages too risky to deliver, i.e. $q_h < |\mathcal{T}|$.  Let $\mathcal{C} = \{\mathcal{C}_1, \mathcal{C}_2, \ldots, \mathcal{C}_{K}\}$ denote a {\bf package delivery plan} for the entire $K$-epoch period of the mission.  Let $E(\mathcal{C}_h | \alpha_h)$ denote the expected reward provided by plan $\mathcal{C}_h$ for epoch $h$ conditioned on whether the agent survives to epoch $h$. The goal of the dispatcher is to maximize the expected reward across all epochs by constructing a package delivery plan. By the law of total expectation, the expected reward $E(\mathcal{C})$ across all epochs for a given package delivery plan $\mathcal{C}$ is:


\begin{equation}
    E(\mathcal{C}) = \sum_{h = 1}^{K} E(\mathcal{C}_h | \alpha_h = 1)P(\alpha_h = 1).
    \label{eq:total_expectation}
\end{equation}



Then, the dispatcher's goal is to solve the following problem:

\begin{problem}
 Risk-Aware Single-Agent Package Delivery (RSPD)

 \begin{equation}
    \text{Find   } \text{arg}\max_{\mathcal{C}_1,\ldots,\mathcal{C}_K} \sum_{h = 1}^{K} E(\mathcal{C}_{h} | \alpha_h = 1 )P(\alpha_h = 1).
    \label{eq:RSPD}
\end{equation}
\end{problem}

Consider the set of cycles $\mathcal{C}_{h} = \{W_{h,1}, W_{h,2}, \ldots, W_{q_h}\}$ representing the sequence of package deliveries assigned to the agent on epoch $h$. Assuming the condition $\alpha_h = 1$ (i.e., that the agent has survived to epoch $h$), the conditional probability $\bar{\rho}_{h,j}$ that the agent completes a given delivery $W_{h,j} \in \mathcal{C}_{h}$, and returns to base is dependent on the probability the agent survives every cycle prior to and including $W_{h,j}$, i.e., $\bar{\rho}_{h,j} = \rho_{h,1}^2\rho_{h,2}^2\cdots\rho_{h,j}^2$.  Let $\psi_{h,j}$ denote the probability that the agent successfully delivers the $j$-th package in epoch $h$, given by $\psi_{h,j} = \bar{\rho}_{h,j-1}\rho_{h,j}$. Then, the expected reward for $W_{h,j}$ is given by $r_{h,j} \psi_{h,j}$. Additionally, $P(\alpha_h = 1)$ is dependent on the probability of finishing the last cycle for each epoch before $h$.  So for $h > 1$, we can calculate $P(\alpha_h = 1)$ by: 
\begin{equation}
    P(\alpha_h = 1) = \prod_{l = 1}^{h-1}\bar{\rho}_{l,q_l} \label{eq:RSPD_q_h}
\end{equation}
where $\bar{\rho}_{l,q_l}$ is the probability of finishing the last cycle $W_{l,q_l} \in \mathcal{C}_l$. Because we are guaranteed the agent is alive at the start of the first epoch, we have $P(\alpha_1 = 1) = 1$. If the agent survives to epoch $h$, the conditional expected reward for epoch $h$ is calculated by:
\begin{equation}
     E(\mathcal{C}_h | \alpha_h = 1)  = \sum_{j = 1}^{q_h}r_{h,j} \psi_{h,j} - \theta (1 - \bar{\rho}_{h,q_h}). \label{eq:RSPD_1epoch_expectation}
\end{equation}
The first term captures the expected rewards of completing the ordered set of tasks in epoch $h$, while the second term captures the expected cost of losing the agent in epoch $h$.
Substituting (\ref{eq:RSPD_q_h}) into (\ref{eq:RSPD}) reveals a telescopic relationship: 
\begin{align}
    &\sum_{h = 1}^K E(\mathcal{C}_{h} | \alpha_h = 1 )P(\alpha_h = 1) = E(\mathcal{C}_{1} | \alpha_1 = 1 ) + \label{eq:sub_RSPD_general_form} \\ & \bar{\rho}_{1,q_1}\left(E(\mathcal{C}_{2} | \alpha_2 = 1 ) + \bar{\rho}_{2,q_2}(E(\mathcal{C}_{3} | \alpha_3 = 1 )  + \ldots)\right).  \nonumber
\end{align}

Equation (\ref{eq:sub_RSPD_general_form}) can be written recursively as follows.

\begin{definition}[Inductive Expected Reward for RSPD]
Given a package delivery plan $\mathcal{C} = \{\mathcal{C}_1, \mathcal{C}_2, \ldots, \mathcal{C}_K\}$ for RSPD, the reward functions $v_1, v_2, \ldots, v_K$  are defined recursively as follows:
\begin{equation}
\begin{split}
    v_K(\mathcal{C}_{K}) &\triangleq E(\mathcal{C}_{K}|\alpha_K = 1) \\
    v_h(\mathcal{C}_{h}) &\triangleq E(\mathcal{C}_{h}|\alpha_h = 1) + \bar{\rho}_{h,q_h} v_{h+1}(\mathcal{C}_{h+1}) 
\end{split}
\label{eq:inductive_expected_reward_RSPD}
\end{equation}
for $h \in \{1, 2, \ldots, K-1\}$.
\end{definition}

This recursive relationship means we can maximize the total expected reward by a Bellman equation that works backwards from epoch $K$:

\begin{align}
    V_h &= \max_{\mathcal{C}_{h}} [E(\mathcal{C}_{h}|\alpha_h = 1) + \bar{\rho}_{q_h} V_{h+1}], h \in \{1,2,...,K - 1 \}
    \label{eq:Bellman_eq_a1_mu1} \\
    V_K &= \max_{\mathcal{C}_{K}} E(\mathcal{C}_{K}|\alpha_K = 1).
    \label{eq:Bellman_eq_base_a1_mu1}
\end{align}

Now that we have the Bellman equation for solving RSPD, we will focus on the characteristics of optimal package delivery plans that solve Equations (\ref{eq:Bellman_eq_a1_mu1}) and (\ref{eq:Bellman_eq_base_a1_mu1}).

\section{Optimal Package Delivery Plans}
For any given package delivery plan, the dispatcher decides which packages to assign in each epoch and in what order. Naturally, when comparing two packages $t_i$ and $t_j$, the dispatcher should pick the package with the highest reward and the lowest probability of failure first, i.e., $r_i > r_j$ and $\rho_i > \rho_j$. However, there will be many cases where $r_i > r_j$ while $\rho_i < \rho_j$, i.e., the most valuable packages are also the most risky. Hence, the dispatcher needs to compare package deliveries by a function of each package's reward and its probability of failure. For each package delivery, we will define its \textbf{reward-to-risk ratio} as follows, and later show that it will serve as the primary means of comparison between packages.

\begin{definition}[Reward-to-Risk Ratio]
 Given a package delivery cycle $W_j = \{t_0,t_j,t_0\}$ for a package $t_j \in T$, let the reward of the package be $r_{j}$ and the probability of successfully delivering the package from the depot be $\rho_j$. We define the \textbf{reward-to-risk ratio} of cycle $W_j$ as:
\begin{equation}
    \gamma_j \triangleq \frac{r_{j}\rho_j }{1 - \rho_j^2}
    \label{eq:reward_to_risk_ratio}.
\end{equation}
\label{def:reward_to_risk_ratio}
The numerator captures the expected reward for delivering the package, and the denominator captures the probability of the agent failing during the package delivery cycle.
\end{definition}

\subsection{Ordering of Cycles}
Given a set of package deliveries, we first show that the dispatcher can construct a package delivery plan for each epoch by ordering the packages by their reward-to-risk ratios.

\begin{lemma}
Consider the optimal package delivery plan for epoch $h$, $\mathcal{C}_h^* = \{W^*_{h,1}, W^*_{h,2}, \ldots, W^*_{h,q_h} \}$. Then, the package deliveries are in non-increasing order of their reward-to-risk ratio, i.e. $\forall i < j : \gamma_{h,i} \geq \gamma_{h,j}$.
\label{lem:ordering_of_cycles}
\end{lemma}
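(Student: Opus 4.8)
The plan is to use an exchange argument. Suppose, for contradiction, that in the optimal plan $\mathcal{C}_h^*$ there exist two consecutive deliveries $W^*_{h,k}$ and $W^*_{h,k+1}$ with $\gamma_{h,k} < \gamma_{h,k+1}$. I would then show that swapping these two deliveries (leaving everything else in the plan, including all earlier and later epochs, untouched) strictly increases the total expected reward, contradicting optimality. Since any ordering that is not non-increasing in $\gamma$ must contain such an adjacent inversion, this suffices.

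**The key computation** is to isolate exactly how the swap affects $v_h(\mathcal{C}_h)$, and thereby $E(\mathcal{C})$. I would write the expected reward contributed by positions $k$ and $k+1$, together with the survival factor $\bar\rho_{h,q_h}$ that multiplies $v_{h+1}$. Let $P = \bar\rho_{h,k-1}$ be the probability of surviving through the first $k-1$ deliveries (unchanged by the swap), let the two packages involved have reward/survival parameters $(r_a,\rho_a)$ and $(r_b,\rho_b)$, and let $S$ denote the product of all factors coming after position $k+1$ (i.e. $\bar\rho$ contributions of later deliveries in epoch $h$ times $v_{h+1}(\mathcal{C}_{h+1})$, plus the deferred reward terms of later deliveries) — crucially, the multiplicative "tail" weight applied to everything after position $k+1$ is $P\rho_a^2\rho_b^2$ in \emph{both} orderings, so those terms cancel. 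What remains is to compare, for order $(a,b)$ versus $(b,a)$:
\begin{equation}
P\big(r_a\rho_a + \rho_a^2 r_b\rho_b\big) \quad\text{versus}\quad P\big(r_b\rho_b + \rho_b^2 r_a\rho_a\big).
\end{equation}
Subtracting, the condition for $(a,b)$ to be at least as good reduces (after dividing by $P>0$) to $r_a\rho_a(1-\rho_b^2) \ge r_b\rho_b(1-\rho_a^2)$, i.e. $\gamma_a \ge \gamma_b$. Hence if $\gamma_{h,k} < \gamma_{h,k+1}$ the swap strictly improves the epoch-$h$ contribution, and because $v_h$ enters $E(\mathcal{C})$ multiplied by the positive constant $P(\alpha_h=1) = \prod_{l<h}\bar\rho_{l,q_l}$ (unaffected by reordering within epoch $h$), it strictly improves $E(\mathcal{C})$.

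**One subtlety to handle carefully** is the cost term $-\theta(1-\bar\rho_{h,q_h})$ in $E(\mathcal{C}_h|\alpha_h=1)$ and the survival weight $\bar\rho_{h,q_h}$ multiplying $v_{h+1}$: both depend only on $\bar\rho_{h,q_h} = \rho_{h,1}^2\cdots\rho_{h,q_h}^2$, which is invariant under permuting the deliveries within the epoch. So the swap leaves the cost term and the coupling to future epochs completely unchanged, and only the reward terms $\sum_j r_{h,j}\psi_{h,j}$ are affected — which is exactly the two-term comparison above. The main obstacle, then, is not any deep idea but bookkeeping: making sure the "everything after position $k+1$" bundle genuinely factors out with the same coefficient in both orderings, and that the argument is stated for an adjacent transposition so that no other positions' $\psi$ values change. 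A clean way to present this is to define the per-epoch reward $\sum_j r_{h,j}\psi_{h,j}$, observe it telescopes as $r_{h,1}\rho_{h,1} + \rho_{h,1}^2(r_{h,2}\rho_{h,2} + \rho_{h,2}^2(\cdots))$, and note that an adjacent swap at positions $(k,k+1)$ only touches the innermost visible pair at that nesting level, reducing to the displayed inequality.
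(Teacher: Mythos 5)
Your proposal is correct and follows essentially the same route as the paper's proof: an adjacent-transposition exchange argument showing that $\bar\rho_{h,q_h}$ (and hence the $\theta$ cost term and the coupling to $v_{h+1}$) is permutation-invariant, after which the swap reduces to comparing $r_a\rho_a + \rho_a^2 r_b\rho_b$ with $r_b\rho_b + \rho_b^2 r_a\rho_a$, which is exactly the reward-to-risk ratio comparison. The paper's Equations (\ref{eq:r2r_implication})--(\ref{eq:r2r_implication_3}) carry out the same bookkeeping you describe.
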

\begin{proof}
By examining (\ref{eq:Bellman_eq_a1_mu1}), we can see that the ordering of the deliveries does not change $\bar{\rho}_{h,q_h}$ because the probability of finishing all the cycles in epoch $h$ is a product of the probability of completing each cycle. Therefore, we only need to prove that $E(\mathcal{C}_h | \alpha_h = 1)$ is maximized by ordering the cycles in non-increasing order of their reward-to-risk ratios. We will prove this by contradiction. 
Consider an optimal plan $\mathcal{C}^*_h = \{W_{h,1}, W_{h,2}, ..., W_{h,q_h} \}$ and suppose $\exists i \in \{ 1,..., q_h - 1 \}$ such that $\gamma_{h,i} < \gamma_{h,i+1}$. Using (\ref{eq:reward_to_risk_ratio}), we have 

\begin{align*}
    \frac{r_{h,i} \rho_{h,i}}{1 - \rho^2_{h,i}} < \frac{ r_{h,i+1} \rho_{h,i+1} }{1 - \rho^2_{h,i+1}}. \nonumber
\end{align*}
Which further implies,
\begin{align}
    r_{h,i} \rho_{h,i} +  r_{h,i+1} \rho_{h,i+1} \rho^2_{h,i} < r_{h,i+1} \rho_{h,i+1} +  r_{h,i} \rho_{h,i} \rho^2_{h,i+1} \label{eq:r2r_implication}.
\end{align}

 Now define $\mathcal{C}_{h}$ to be the same as $\mathcal{C}^*_h$ except with the packages in positions $i$ and $i+1$ swapped. Define $\bar{\psi}_{h,i}$ and $\bar{\psi}_{h,i+1}$ to be the probability of receiving rewards $r_{h,i}$ and $r_{h,i+1}$ in the ordering of $\mathcal{C}_h$. Then,
\begin{align}
    E(\mathcal{C}_{h} | & \alpha_h = 1) - E(\mathcal{C}^*_{h} | \alpha_h = 1) = \label{eq:r2r_implication_2} \\ & \sum_{k=0}^{i-1}r_{h,k}\psi_{h,k} + r_{h,i+1}\bar{\psi}_{h,i+1} + r_{h,i}\bar{\psi}_{h,i} \nonumber \\ & + \sum_{k=i+2}^{q_h}r_{h,k}\psi_{h,k} - \theta(1 - \bar{\rho}_{h,q_h}) \nonumber \\ & -\sum_{k=0}^{i-1}r_{h,k}\psi_{h,k}   - r_{h,i}\psi_{h,i} - r_{h,i+1}\psi_{h,i+1} \nonumber \\ & - \sum_{k=i+2}^{q_h}r_{h,k}\psi_{h,k} + \theta(1 - \bar{\rho}_{h,q_h}). \nonumber
\end{align}
 Using the definition of $\psi_{h,i}$ and the ordering of $W_{h,i}$ and $W_{h,i+1}$ in $\mathcal{C}_h$, we know that $\bar{\psi}_{h,i+1} = \rho_{h,i+1}\bar{\rho}_{h,i-1}$ and $\bar{\psi}_{h,i} = \rho_{h,i}\rho_{h,i+1}^2\bar{\rho}_{h,i-1}$. Thus,
\begin{align}
    E(\mathcal{C}_{h} |& \alpha_h = 1) - E(\mathcal{C}^*_{h} | \alpha_h = 1) = \nonumber \\ & r_{h,i+1}{\rho_{h,i+1}}\bar{\rho}_{h,i-1} + r_{h,i}{\rho_{h,i}}{\rho_{h,i+1}^2}\bar{\rho}_{h,i-1} \nonumber \\ - & r_{h,i}{\rho_{h,i}}\bar{\rho}_{h,i-1} - r_{h,i+1}{\rho_{h,i+1}}\rho^2_{h,i}\bar{\rho}_{h,i-1}. \label{eq:r2r_implication_3}
\end{align}

By factoring out $\bar{\rho}_{h,i-1}$ from all the terms and using (\ref{eq:r2r_implication}) in (\ref{eq:r2r_implication_3}), one can see that $E(\mathcal{C}_{h} | \alpha_h = 1) - E(\mathcal{C}^*_{h} | \alpha_h = 1) > 0$ which contradicts the optimality of $\mathcal{C}^*_{h}$.

\end{proof}

Next, we provide a pair of results that specify exactly which packages should be included in the package delivery plan for each epoch.




\begin{lemma}
Consider the optimal package delivery plan $\mathcal{C}^*_h$ that maximizes the Bellman equation (\ref{eq:Bellman_eq_a1_mu1}) for epoch $h$. If the reward-to-risk ratio $\gamma'$ for package $t' \in T$ satisfies the following inequality, then it will be included in the optimal plan $\mathcal{C}^*_h$:
\label{lem:RSPD_good_r2r_lemma}
\begin{equation}
    \gamma' > \theta + V_{h + 1}.
    \label{eq:RSPD_r2r_lemma}
\end{equation}
\end{lemma}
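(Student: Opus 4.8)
The plan is to argue by contradiction. Suppose the package $t'$ satisfies $\gamma' > \theta + V_{h+1}$ but does \emph{not} appear in the optimal plan $\mathcal{C}^*_h = \{W^*_{h,1},\dots,W^*_{h,q_h}\}$; if $\mathcal{C}^*_h$ already contains every package there is nothing to prove, so this is the only case worth considering. Write $r' \triangleq r_{t'}$ and $\rho' \triangleq \rho_{t'}$ for the reward and per-leg success probability of $t'$. Since $t'$ is absent, we may form a new feasible plan $\mathcal{C}_h$ by taking $\mathcal{C}^*_h$ and appending the cycle $W' = \{t_0, t', t_0\}$ as the final delivery. Appending at the \textbf{end} is the crucial choice: the conditional success probabilities $\psi_{h,j}$, and hence the reward contributions $r_{h,j}\psi_{h,j}$, of the original $q_h$ deliveries are then unchanged, so only the trailing survival probability $\bar\rho_{h,q_h}$, the $\theta$-loss term, and the discounted future value $\bar\rho_{h,q_h}V_{h+1}$ are affected. (The ordering guaranteed by Lemma~\ref{lem:ordering_of_cycles} is not needed for the contradiction, though it tells us where $t'$ actually sits in the genuinely optimal plan.)

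Next I would evaluate the Bellman objective $E(\mathcal{C}_h \mid \alpha_h = 1) + \bar\rho_{h,q_h+1} V_{h+1}$ for the augmented plan, using $\bar\rho_{h,q_h+1} = \bar\rho_{h,q_h}(\rho')^2$ and $\psi_{h,q_h+1} = \bar\rho_{h,q_h}\rho'$, and subtract the value $V_h = E(\mathcal{C}^*_h \mid \alpha_h = 1) + \bar\rho_{h,q_h} V_{h+1}$ of the optimal plan. After cancellation and factoring out $\bar\rho_{h,q_h}$, the difference collapses to
\[
\bar\rho_{h,q_h}\Bigl(r'\rho' - (\theta + V_{h+1})\bigl(1 - (\rho')^2\bigr)\Bigr),
\]
which, upon dividing by $1 - (\rho')^2 > 0$, is strictly positive exactly when $\dfrac{r'\rho'}{1-(\rho')^2} = \gamma' > \theta + V_{h+1}$ — precisely the hypothesis. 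Hence $\mathcal{C}_h$ strictly outperforms $\mathcal{C}^*_h$ in the Bellman equation (\ref{eq:Bellman_eq_a1_mu1}), contradicting optimality of $\mathcal{C}^*_h$, so $t'$ must be included in $\mathcal{C}^*_h$.

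I expect the only real work to be the bookkeeping in that cancellation: one must check that the $\theta$-term changing from $\theta(1-\bar\rho_{h,q_h})$ to $\theta(1-\bar\rho_{h,q_h}(\rho')^2)$ combines with the discounted term changing from $\bar\rho_{h,q_h}V_{h+1}$ to $\bar\rho_{h,q_h}(\rho')^2 V_{h+1}$ to yield exactly the factor $(\theta+V_{h+1})(1-(\rho')^2)$ above. One also needs $\bar\rho_{h,q_h} > 0$ for the improvement to be strict, which holds whenever every delivered package has positive success probability — an assumption that costs nothing, since a cycle with $\rho = 0$ yields no reward and only a guaranteed loss and so would never appear in an optimal plan (and if $q_h = 0$ the convention $\bar\rho_{h,0}=1$ applies). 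Finally, the degenerate case $\rho' = 1$, where $\gamma'$ is infinite and the hypothesis is automatically met, is immediate: appending $W'$ then adds the reward $r'\bar\rho_{h,q_h} > 0$ while leaving $\bar\rho_{h,q_h}$, the loss term, and the future value untouched.
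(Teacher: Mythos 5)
Your proof is correct and follows essentially the same route as the paper: assume $t'$ is omitted, append its cycle at the end of $\mathcal{C}^*_h$, and show the resulting change in the Bellman objective is $\bar\rho_{h,q_h}\bigl(r'\rho' - (\theta+V_{h+1})(1-(\rho')^2)\bigr) > 0$, contradicting optimality. Your handling of the edge cases ($\bar\rho_{h,q_h}>0$ and $\rho'=1$) and your observation that the ordering lemma is not actually needed for the contradiction are minor refinements of the paper's argument, not a different approach.
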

\begin{proof}
We will prove this by contradiction. Suppose $t'$ is not included in the optimal plan $\mathcal{C}^*_h$. Then, the value function  (\ref{eq:Bellman_eq_a1_mu1}) for $\mathcal{C}^*_h$ is
\begin{equation}
    V_h^* =  \sum_{W_{h,i} \in \mathcal{C}^*_h}r_{h,i}\psi_{h,i} + \bar{\rho}_{h,q_h} V_{h + 1} - \theta(1 - \bar{\rho}_{h,q_h}). \nonumber
\end{equation}
Now, let $V'_h$ be the accumulated reward if the delivery cycle $W'$ for $t'$ is included in $\mathcal{C}^*_h$. Let $r'$ and $\rho'$ be the reward and probability of success for delivering $t'$ (and returning), respectively. We relax the ordering of the packages so that $t'$ is delivered after all packaged in $\mathcal{C}^*_h$. By doing this, Lemma \ref{lem:ordering_of_cycles} guarantees this ordering is a lower bound on $V'_h$. We have
\begin{equation}
    V'_h \geq \sum_{W_{h,i} \in \mathcal{C}^*_h}r_{h,i}\psi_{h,i} + r'\psi' +  \rho'\bar{\rho}_{h,q_h} V_{h + 1} - \theta(1 - \rho'\bar{\rho}_{h,q_h}). \nonumber
\end{equation}

Subtracting the value function for $\mathcal{C}'_h$ gives us

\begin{align}
    V'_h - V^*_h & \geq r'{\rho'}\bar{\rho}_{h,q_h} + (V_{h+1}+\theta)(({\rho'})^2\bar{\rho}_{h,q_h} - \bar{\rho}_{h,q_h}) \\
    & =   \bar{\rho}_{h,q_h}(\frac{r'{\rho'}}{(1 - (\rho')^2)} - V_{h + 1} - \theta)(1-(\rho')^2).
    \label{eq:RSPD_r2r_lemma_final_cost}
\end{align}

By applying Definition \ref{def:reward_to_risk_ratio} to (\ref{eq:RSPD_r2r_lemma}) we obtain

\begin{equation}
    \frac{r'{\rho'}}{(1 - (\rho')^2)} > V_{h + 1} + \theta. \label{eq:RSPD_r2r_lemma_expanded}
\end{equation}

But, applying (\ref{eq:RSPD_r2r_lemma_expanded}) to (\ref{eq:RSPD_r2r_lemma_final_cost}) implies that $V'_h - V^*_h > 0$ which contradicts the optimality of $\mathcal{C}^*_h$.

\end{proof}

The derivations in the above lemma also show that including any task whose reward-to-risk ratio is equal to $\theta + V_{h+1}$ will not affect $V_h$. We now show which tasks will be excluded from the optimal plan.

\begin{lemma}
Any package delivery whose reward-to-risk ratio does not satisfy Lemma \ref{lem:RSPD_good_r2r_lemma} is not included in the optimal package delivery plan for epoch $h$, i.e., \label{lem:RSPD_bad_r2r_lemma}
\begin{equation}
    \gamma' < \theta + V_{h + 1}.
    \label{eq:RSPD_bad_r2r_lemma}
\end{equation}
\end{lemma}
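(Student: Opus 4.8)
The plan is to argue by contradiction, mirroring the structure of the proof of Lemma \ref{lem:RSPD_good_r2r_lemma} but \emph{removing} a cycle rather than inserting one. Suppose the optimal plan $\mathcal{C}_h^*$ for epoch $h$ contains some package $t'$ whose reward-to-risk ratio satisfies $\gamma' < \theta + V_{h+1}$. By Lemma \ref{lem:ordering_of_cycles} we may assume without loss of generality that $\mathcal{C}_h^* = \{W_{h,1},\ldots,W_{h,q_h}\}$ is sorted in non-increasing order of reward-to-risk ratio; then the last cycle $W_{h,q_h}$ has the smallest ratio among those in the plan, so $\gamma_{h,q_h} \le \gamma' < \theta + V_{h+1}$. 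It therefore suffices to show that deleting the last cycle strictly increases the value, which contradicts the optimality of $\mathcal{C}_h^*$ and shows that no such $t'$ can be included.

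Next I would write out both value functions explicitly. Let $\mathcal{C}_h^- = \{W_{h,1},\ldots,W_{h,q_h-1}\}$, let $r$ and $\rho$ denote the reward and one-leg success probability of the package in $W_{h,q_h}$, and let $A \triangleq \bar{\rho}_{h,q_h-1}$ be the probability of completing every cycle before the last. Using (\ref{eq:Bellman_eq_a1_mu1}), (\ref{eq:RSPD_1epoch_expectation}), and the identities $\psi_{h,q_h} = A\rho$ and $\bar{\rho}_{h,q_h} = A\rho^2$, and noting that the $\psi_{h,j}$ for $j < q_h$ are unaffected by the deletion, the partial rewards $\sum_{j<q_h} r_{h,j}\psi_{h,j}$ cancel and the difference collapses to
\begin{equation}
V_h(\mathcal{C}_h^-) - V_h(\mathcal{C}_h^*) = A\left(1-\rho^2\right)\left(\theta + V_{h+1} - \gamma_{h,q_h}\right). \nonumber
\end{equation}
Since $\gamma_{h,q_h} < \theta + V_{h+1}$ by the ordering step, this quantity is positive provided $A > 0$ and $\rho < 1$; the latter holds because $\rho = 1$ would make $\gamma_{h,q_h}$ unbounded, and the degenerate case $A = 0$ (some earlier package with $\rho = 0$, hence ratio $0$) is handled by instead deleting that zero-ratio cycle, for which the same computation applies verbatim. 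This contradicts the optimality of $\mathcal{C}_h^*$, which completes the proof.

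I expect the main obstacle to be organizational rather than mathematical: one must resist deleting $t'$ itself from its (possibly interior) position, because that rescales the reach probabilities $\psi_{h,j}$ of every later cycle and leaves an expression with no obvious sign. Routing the argument through the \emph{last} cycle — which by Lemma \ref{lem:ordering_of_cycles} has the minimum ratio and whose removal leaves all other $\psi_{h,j}$ intact — is what makes the telescoping clean. The only remaining care is the routine verification of $\psi_{h,q_h} = \bar{\rho}_{h,q_h-1}\rho$ and $\bar{\rho}_{h,q_h} = \bar{\rho}_{h,q_h-1}\rho^2$ and the boundary case $q_h = 1$, where $\mathcal{C}_h^-$ is empty and $V_h(\mathcal{C}_h^-) = V_{h+1}$, consistent with the displayed identity at $A = 1$.
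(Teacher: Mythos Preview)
Your proposal is correct and follows essentially the same approach as the paper: assume a package with $\gamma' < \theta + V_{h+1}$ sits in the optimal plan, use Lemma~\ref{lem:ordering_of_cycles} to push it (or something with even smaller ratio) to the tail, and then reverse the difference computation from Lemma~\ref{lem:RSPD_good_r2r_lemma} to obtain a contradiction. The paper's proof is a two-line sketch that first invokes Lemma~\ref{lem:RSPD_good_r2r_lemma} to populate $\mathcal{C}^*_h$ with all ``good'' packages and then says that appending $t'$ at the end decreases $V_h$; your version is cleaner in that it bypasses this preliminary step by working directly with the \emph{last} cycle of $\mathcal{C}^*_h$, whose ratio is automatically $\le \gamma'$, and your explicit identity $V_h(\mathcal{C}_h^-) - V_h(\mathcal{C}_h^*) = A(1-\rho^2)(\theta + V_{h+1} - \gamma_{h,q_h})$ is exactly the sign-flipped form of (\ref{eq:RSPD_r2r_lemma_final_cost}). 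One small caveat: your handling of the degenerate case $A=0$ is not quite ``verbatim'' --- deleting an interior zero-$\rho$ cycle does perturb the downstream $\psi$'s --- but since $A=0$ forces all trailing cycles to have ratio $0$ by the ordering, iterating your last-cycle deletion (each step giving a weak inequality) until $A>0$ fixes this cleanly; the paper does not address this edge case at all.
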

\begin{proof}
By Lemma \ref{lem:RSPD_good_r2r_lemma}, $\mathcal{C}^*_h$ should contain all packages in $\mathcal{T}$ whose reward-to-risk ratio does not satisfy (\ref{eq:RSPD_bad_r2r_lemma}). By applying Lemma \ref{lem:ordering_of_cycles}, any package $t'$ whose $\gamma'$ satisfies (\ref{eq:RSPD_bad_r2r_lemma}) will be placed after all package deliveries in $\mathcal{C}^*_h$. Then, by following a similar proof as in Lemma \ref{lem:RSPD_good_r2r_lemma}, it is easy to see that $V_h$ decreases by adding $t'$ to $\mathcal{C}^*_h$.
\end{proof}

\begin{corollary}
By consequence of Lemma \ref{lem:RSPD_good_r2r_lemma} and Lemma \ref{lem:RSPD_bad_r2r_lemma}, the optimal expected reward $V_h$  for epoch $h$ is obtained by the dispatcher assigning all packages whose reward-to-risk ratio satisfies (\ref{eq:RSPD_r2r_lemma}) and ordering them by non-increasing reward-to-risk ratio when calculating $\mathcal{C}^*_h$, i.e.,

\begin{align}
    \mathcal{C}^*_h  = \{ W_{h,1}, W_{h,2}, ..., W_{h,q_h} |  \gamma_{h,1} \geq ... \geq \gamma_{h,q_h} > \theta + V_{h+1} \} \nonumber
\end{align}

\label{cor:RSPD_reward_to_risk_ratio_condition}
\end{corollary}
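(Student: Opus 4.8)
The plan is to assemble the corollary directly from the three preceding lemmas, treating $V_{h+1}$ as a fixed scalar---namely the optimal value of the tail subproblem starting at epoch $h+1$, which is well defined by backward induction from the base case (\ref{eq:Bellman_eq_base_a1_mu1}). First I would fix $h$ together with the number $V_{h+1}$, and note that (\ref{eq:Bellman_eq_a1_mu1}) asks us to choose, over all ordered subsets of $\mathcal{T}$, a plan $\mathcal{C}_h$ maximizing $E(\mathcal{C}_h \mid \alpha_h = 1) + \bar{\rho}_{h,q_h} V_{h+1}$, where by (\ref{eq:RSPD_1epoch_expectation}) both terms depend only on which packages are chosen and in what order.

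Next I would invoke Lemma \ref{lem:ordering_of_cycles} to reduce the search to plans whose cycles appear in non-increasing order of reward-to-risk ratio: any optimal plan can be reordered this way without decreasing its value, so it remains only to decide which subset of packages to include. Then I would apply Lemma \ref{lem:RSPD_good_r2r_lemma} to force every package with $\gamma > \theta + V_{h+1}$ into the optimal set, and Lemma \ref{lem:RSPD_bad_r2r_lemma} to exclude every package with $\gamma < \theta + V_{h+1}$. Together these two lemmas partition $\mathcal{T}$ into a ``must-include'' set and a ``must-exclude'' set, leaving only the packages satisfying $\gamma = \theta + V_{h+1}$ undetermined.

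The single point that needs care is this boundary case: here the remark immediately following Lemma \ref{lem:RSPD_good_r2r_lemma} shows that adding such a package changes neither $V_h$ nor the optimality of the plan, so the plan that omits all of them---which is exactly the $\mathcal{C}^*_h$ displayed in the statement---is an optimal plan, and by construction its value equals $V_h$. I would finish by substituting this $\mathcal{C}^*_h$ back into (\ref{eq:RSPD_1epoch_expectation}) and (\ref{eq:Bellman_eq_a1_mu1}) to confirm it attains the maximum. I expect no substantive obstacle; the only real work is the bookkeeping that makes explicit that the two inclusion/exclusion lemmas, once combined with the ordering lemma, pin down the greedy set uniquely (up to the boundary set of measure zero), so that no interaction between packages can make a different subset strictly better.
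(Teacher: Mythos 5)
Your proposal is correct and follows the same route the paper intends: the corollary is stated without a separate proof precisely because it is the direct combination of Lemma \ref{lem:ordering_of_cycles} (ordering), Lemma \ref{lem:RSPD_good_r2r_lemma} (inclusion), and Lemma \ref{lem:RSPD_bad_r2r_lemma} (exclusion), with the boundary case $\gamma = \theta + V_{h+1}$ dispatched by the remark following Lemma \ref{lem:RSPD_good_r2r_lemma}, exactly as you do. Your explicit treatment of $V_{h+1}$ as a fixed scalar obtained by backward induction is a useful piece of bookkeeping the paper leaves implicit, but it is not a different argument.
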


\begin{corollary}
The package delivery plan for epoch $h$ is always a subset of the package delivery plan for epoch $h+1$, i.e.,

\begin{equation}
    \forall h \in \{ 1,2,...,K-1 \} , \mathcal{C}_h \subseteq \mathcal{C}_{h+1}. \nonumber
\end{equation}

\label{cor:RSPD_subset_package_deliveries}
\end{corollary}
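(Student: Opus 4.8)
The plan is to reduce the claim to a monotonicity property of the value functions and then read it off the Bellman recursion. By Corollary~\ref{cor:RSPD_reward_to_risk_ratio_condition}, for each $h \in \{1,\dots,K-1\}$ the optimal plan is exactly $\mathcal{C}^*_h = \{\, t \in \mathcal{T} : \gamma_t > \theta + V_{h+1} \,\}$, and applying the argument of Lemma~\ref{lem:RSPD_good_r2r_lemma} to the Bellman base case (\ref{eq:Bellman_eq_base_a1_mu1}) (formally, setting the ``continuation value'' to $0$) gives $\mathcal{C}^*_K = \{\, t \in \mathcal{T} : \gamma_t > \theta \,\}$. So if we adopt the convention $V_{K+1} \triangleq 0$, then $\mathcal{C}^*_h = \{\, t : \gamma_t > \theta + V_{h+1}\,\}$ for every $h \in \{1,\dots,K\}$, and these sets satisfy $\mathcal{C}^*_h \subseteq \mathcal{C}^*_{h+1}$ precisely when $\theta + V_{h+1} \ge \theta + V_{h+2}$. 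Hence it suffices to prove $V_1 \ge V_2 \ge \cdots \ge V_K \ge V_{K+1} = 0$.

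The monotonicity I would obtain from feasibility of the ``do nothing'' plan. For epoch $K$, the empty plan ($q_K = 0$) is an admissible choice in (\ref{eq:Bellman_eq_base_a1_mu1}) --- Corollary~\ref{cor:RSPD_reward_to_risk_ratio_condition} already permits $\mathcal{C}^*_h = \emptyset$ --- and by (\ref{eq:RSPD_1epoch_expectation}) it yields conditional expected reward $0$, so $V_K \ge 0$. For any $h \in \{1,\dots,K-1\}$, feeding the empty plan into (\ref{eq:Bellman_eq_a1_mu1}) gives $E(\mathcal{C}_h \mid \alpha_h = 1) = 0$ and $\bar\rho_{h,q_h} = 1$ (an empty product), so $V_h \ge 0 + 1 \cdot V_{h+1} = V_{h+1}$. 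Chaining these, $V_1 \ge V_2 \ge \cdots \ge V_K \ge 0$, which is exactly what the first paragraph needs. To conclude, fix $h \in \{1,\dots,K-1\}$: since $\theta + V_{h+2} \le \theta + V_{h+1}$, any package with $\gamma_t > \theta + V_{h+1}$ also has $\gamma_t > \theta + V_{h+2}$, so $t \in \mathcal{C}^*_h$ implies $t \in \mathcal{C}^*_{h+1}$, i.e.\ $\mathcal{C}_h \subseteq \mathcal{C}_{h+1}$.

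I do not expect a substantive obstacle; the only points requiring care are (i) that the empty plan is a legitimate choice in the Bellman recursion, so that $q_h = 0$, $\bar\rho_{h,0} = 1$, and $E(\mathcal{C}_h\mid\alpha_h=1)=0$ are all valid, and (ii) the boundary bookkeeping that treats epoch $K$ as having continuation value $0$ (so its threshold is simply $\theta$); both are immediate from the problem formulation and Corollary~\ref{cor:RSPD_reward_to_risk_ratio_condition}. If one would rather not invoke the empty plan, an equivalent route is a short backward induction: substituting the epoch-$(h+1)$ optimal plan into the epoch-$h$ Bellman equation (its per-epoch conditional reward and survival probability do not depend on which epoch it is used in) gives $V_h - V_{h+1} \ge \bar\rho\,(V_{h+1} - V_{h+2})$ with $\bar\rho \in [0,1]$ the survival probability of that plan, which together with the base inequality $V_K \ge 0$ propagates the ordering $V_1 \ge \cdots \ge V_K \ge 0$ upward.
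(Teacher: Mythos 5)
Your proof is correct and follows the route the paper implicitly intends: the corollary is stated there without proof as an immediate consequence of Corollary~\ref{cor:RSPD_reward_to_risk_ratio_condition}, and the one missing ingredient is exactly the monotonicity $V_1 \ge V_2 \ge \cdots \ge V_K \ge 0$, which you supply cleanly via feasibility of the empty plan (with the correct conventions $\bar\rho_{h,0}=1$ and $V_{K+1}=0$). No gaps; nothing further is needed.
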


Using Corollary \ref{cor:RSPD_reward_to_risk_ratio_condition} and Corollary \ref{cor:RSPD_subset_package_deliveries}, we construct Algorithm \ref{alg:RSPD} to find the optimal solution to RSPD. Algorithm \ref{alg:RSPD} finishes in $O(n\log n)$ time because sorting the packages by their reward-to-risk ratio requires only $O(n\log n)$. The while loop in line \ref{alg:RSPD:l1} can be accomplished in $O(n)$ time instead of $O(Kn)$ time by using Corollary \ref{cor:RSPD_subset_package_deliveries} to construct package delivery plans from the indices in $\mathcal{T}$ instead of copying packages for each iteration.

\begin{algorithm}[h!]
\SetAlgoLined
\KwIn{An instance of RSPD}
\KwResult{Optimal $\mathcal{C}$ that solves RSPD }
\For{$t_i \in T$}{
 $W_i = \{t_0, t_i, t_0 \}$\;
 Calculate $\gamma_i$ for $W_i$ using equation (\ref{eq:reward_to_risk_ratio}) in Definition \ref{def:reward_to_risk_ratio}\;}
 Let $\mathcal{C}_K$ be the ordered set of all package deliveries $W_i$ such that $\gamma_i > \theta$ and the package deliveries are in non-increasing order of $\gamma_i$\;
 Calculate $V_K$ using (\ref{eq:RSPD_1epoch_expectation}) and (\ref{eq:Bellman_eq_base_a1_mu1}) with $\mathcal{C}_K$\;
 $h = K - 1$\;
 
 \While{$h \ne 0$ \label{alg:RSPD:l1}}{
  Let $\mathcal{C}_h$ be the ordered set of all package deliveries $W_i$ where $\gamma_i > V_{h+1} + \theta$ (non-increasing order of $\gamma_i$) \;
  Calculate $V_h$ using (\ref{eq:RSPD_1epoch_expectation}) and (\ref{eq:Bellman_eq_a1_mu1}) with $\mathcal{C}_h$\;
  $h = h - 1$\;
 }
 \caption{Optimal Solution to RSPD}
 \label{alg:RSPD}
\end{algorithm}
We also note here that the above results (other than Corollary \ref{cor:RSPD_subset_package_deliveries}) would also hold if the set of package delivery plans changed at each epoch. This is shown by the following corollary.
\begin{corollary}
Consider an instance of RSPD where each epoch $h$ has a unique set of packages $\mathcal{T}_h$ available for delivery. Then, Algorithm \ref{alg:RSPD} calculates the optimal package delivery plan in time $Kn + O(n \log n)$.
\end{corollary}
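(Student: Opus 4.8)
The plan is to observe that the hypothesis ``$\mathcal{T}$ is the same set in every epoch'' is never actually used in the structural results of this section. Lemma~\ref{lem:ordering_of_cycles}, Lemma~\ref{lem:RSPD_good_r2r_lemma}, Lemma~\ref{lem:RSPD_bad_r2r_lemma}, and Corollary~\ref{cor:RSPD_reward_to_risk_ratio_condition} are all single‑epoch statements: given the downstream value $V_{h+1}$, they characterize the maximizer of the Bellman equation~(\ref{eq:Bellman_eq_a1_mu1}) among plans whose cycles deliver packages drawn from whatever pool is available at epoch $h$. I would re‑read their proofs with $\mathcal{T}$ replaced by $\mathcal{T}_h$ and check that each argument is still valid: the exchange argument in Lemma~\ref{lem:ordering_of_cycles} only permutes two adjacent cycles already inside $\mathcal{C}_h$; the proofs of Lemmas~\ref{lem:RSPD_good_r2r_lemma}--\ref{lem:RSPD_bad_r2r_lemma} only append or delete one cycle whose package lies in the epoch‑$h$ pool; and the telescoping identity~(\ref{eq:sub_RSPD_general_form}) together with the Bellman recursion~(\ref{eq:Bellman_eq_a1_mu1})--(\ref{eq:Bellman_eq_base_a1_mu1}) follow from the problem formulation alone. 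Consequently the characterization
\[
\mathcal{C}^*_h=\{\,W_{h,1},\ldots,W_{h,q_h}: t_{h,i}\in\mathcal{T}_h,\ \gamma_{h,1}\ge\cdots\ge\gamma_{h,q_h}>\theta+V_{h+1}\,\}
\]
carries over verbatim, and backward induction on $h$ (with base case~(\ref{eq:Bellman_eq_base_a1_mu1})) shows that the variant of Algorithm~\ref{alg:RSPD} that at iteration $h$ selects from $\mathcal{T}_h$ instead of a fixed $\mathcal{T}$ returns an optimal plan.

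For the running time, let $n=\bigl|\bigcup_{h=1}^{K}\mathcal{T}_h\bigr|$ be the number of distinct packages over the whole mission. I would compute the reward‑to‑risk ratio~(\ref{eq:reward_to_risk_ratio}) of each distinct package once, at cost $O(n)$, and sort the whole collection once, at cost $O(n\log n)$, recording for every epoch $h$ its members in this global sorted order. Then, iterating $h$ from $K$ down to $1$, each iteration scans the (already sorted) list $\mathcal{T}_h$, keeps exactly the packages with $\gamma>\theta+V_{h+1}$, and evaluates~(\ref{eq:RSPD_1epoch_expectation}) and~(\ref{eq:Bellman_eq_a1_mu1}) to obtain $V_h$; this is $O(|\mathcal{T}_h|)=O(n)$ per epoch, hence $O(Kn)$ over all epochs. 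Adding the one‑time $O(n)+O(n\log n)$ preprocessing gives the claimed $Kn+O(n\log n)$ bound.

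The main obstacle — and the reason the complexity degrades from the $O(n\log n)$ of the uniform‑pool case — is that Corollary~\ref{cor:RSPD_subset_package_deliveries} no longer holds: when $\mathcal{T}_h\neq\mathcal{T}_{h+1}$, a plan $\mathcal{C}_h$ need not be a subset of $\mathcal{C}_{h+1}$, since a package cheap enough to include at epoch $h$ may simply be unavailable at epoch $h+1$ (and vice versa). Thus the incremental trick that collapsed the while loop of Algorithm~\ref{alg:RSPD} to $O(n)$ in the original setting is unavailable, and each epoch's pool must be rescanned, which is precisely what produces the $Kn$ term. In writing the argument carefully I would therefore (i) verify line by line that the proofs of Lemmas~\ref{lem:ordering_of_cycles}--\ref{lem:RSPD_bad_r2r_lemma} and Corollary~\ref{cor:RSPD_reward_to_risk_ratio_condition} never invoke $\mathcal{T}_h=\mathcal{T}_{h+1}$, and (ii) state explicitly what $n$ denotes, because $\sum_h|\mathcal{T}_h|$ can be as large as $Kn$, which is exactly the dominant cost.
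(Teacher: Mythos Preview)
Your proposal is correct and follows essentially the same approach as the paper: observe that Lemmas~\ref{lem:ordering_of_cycles}--\ref{lem:RSPD_bad_r2r_lemma} and Corollary~\ref{cor:RSPD_reward_to_risk_ratio_condition} are epoch-local and thus survive heterogeneous pools, while Corollary~\ref{cor:RSPD_subset_package_deliveries} fails, forcing a per-epoch rescan that yields the $Kn$ term. Your write-up is in fact more detailed than the paper's own proof, which simply asserts these two points and reads off the complexity.
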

\begin{proof}
Note that Lemmas \ref{lem:ordering_of_cycles}-\ref{lem:RSPD_bad_r2r_lemma} and Corollary \ref{cor:RSPD_reward_to_risk_ratio_condition} still hold for heterogeneous epochs. However, we cannot use Corollary \ref{cor:RSPD_subset_package_deliveries} to remove the $K$ dependence on the time complexity. Analyzing Algorithm \ref{alg:RSPD} with slight modifications to account for heterogeneous epochs and without using Corollary \ref{cor:RSPD_subset_package_deliveries} gives a time complexity of $Kn + O(n \log n)$.
\end{proof}

\subsection{Infinite Horizon Package Delivery}

Up until now we have only considered finite duration missions that are guaranteed to end. However, there may be scenarios where the dispatcher cannot assume the mission will ever end (e.g., logistics companies). We will now define an instance of RSPD with an infinite number of epochs and provide an optimal solution to the problem.

\begin{problem}[Infinite Horizon RSPD (IHRSPD)]
Consider an instance of RSPD with an infinite number of epochs. Then, the problem is to maximize the following expected reward:
 \begin{equation}
    \text{Find   } \text{arg}\max_{\mathcal{C}_1, \mathcal{C}_2,\ldots} \sum_{h = 1}^{\infty} E(\mathcal{C}_{h} | \alpha_h = 1 )P(\alpha_h = 1).
    \label{eq:IHRSPD}
\end{equation}
\label{prob:IHRSPD}
\end{problem}

Despite being an infinite sum, we can show that every package delivery plan for an instance of IHRSPD has a finite expected reward. 

\begin{lemma}
Every instance of IHRSPD has a finite expected reward. \label{lem:IHRSPD_finite_expected_reward}
\end{lemma}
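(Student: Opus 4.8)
The plan is to bound the conditional reward of each epoch by a constant multiple of that epoch's failure probability, and then exploit the telescoping already visible in~(\ref{eq:sub_RSPD_general_form}) to collapse the infinite sum to a finite quantity.

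First I would rewrite the single-epoch reward~(\ref{eq:RSPD_1epoch_expectation}) in terms of reward-to-risk ratios. Using $\bar{\rho}_{h,j} = \bar{\rho}_{h,j-1}\rho_{h,j}^2$ and $\psi_{h,j} = \bar{\rho}_{h,j-1}\rho_{h,j}$ one checks the identity $r_{h,j}\psi_{h,j} = \gamma_{h,j}\,(\bar{\rho}_{h,j-1} - \bar{\rho}_{h,j})$, so that
\[
    E(\mathcal{C}_h \mid \alpha_h = 1) = \sum_{j=1}^{q_h} \gamma_{h,j}\,(\bar{\rho}_{h,j-1} - \bar{\rho}_{h,j}) \;-\; \theta\,(1 - \bar{\rho}_{h,q_h}).
\]
Since each $\rho_j < 1$ --- which is exactly the condition making $\gamma_j$ well defined in Definition~\ref{def:reward_to_risk_ratio} --- the constant $\gamma_{\max} \triangleq \max_{t_j \in \mathcal{T}} \gamma_j$ is finite, and because the increments $\bar{\rho}_{h,j-1} - \bar{\rho}_{h,j}$ are nonnegative and sum to $1 - \bar{\rho}_{h,q_h}$ (as $\bar{\rho}_{h,0} = 1$), this gives the two-sided bound $-\theta(1 - \bar{\rho}_{h,q_h}) \le E(\mathcal{C}_h \mid \alpha_h = 1) \le \gamma_{\max}(1 - \bar{\rho}_{h,q_h})$, which also holds trivially when $q_h = 0$.

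Next I would multiply by $P(\alpha_h = 1) \ge 0$ and invoke~(\ref{eq:RSPD_q_h}), which yields $P(\alpha_{h+1}=1) = \bar{\rho}_{h,q_h}\,P(\alpha_h=1)$, hence $(1 - \bar{\rho}_{h,q_h})\,P(\alpha_h=1) = P(\alpha_h=1) - P(\alpha_{h+1}=1)$. Summing over $h = 1,\dots,N$ telescopes to $P(\alpha_1=1) - P(\alpha_{N+1}=1) = 1 - P(\alpha_{N+1}=1) \le 1$, so the nonnegative series $\sum_{h\ge1}(1-\bar{\rho}_{h,q_h})P(\alpha_h=1)$ converges to a value in $[0,1]$. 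Writing $E(\mathcal{C}_h\mid\alpha_h=1)P(\alpha_h=1)$ as the difference of its nonnegative reward part and its nonnegative cost part and applying the above bound term by term, the reward part sums to at most $\gamma_{\max}$ and the cost part to at most $\theta$; hence the series in~(\ref{eq:IHRSPD}) is absolutely convergent with $-\theta \le E(\mathcal{C}) \le \gamma_{\max} < \infty$.

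The only substantive step is the identity $r_{h,j}\psi_{h,j} = \gamma_{h,j}(\bar{\rho}_{h,j-1} - \bar{\rho}_{h,j})$, which is what turns the epoch reward into a form in which the survival probabilities telescope; everything after that is bookkeeping. The pitfall to watch for is epochs with $q_h = 0$: these contribute no reward yet leave $P(\alpha_{h+1}=1) = P(\alpha_h=1)$, so $P(\alpha_h=1)$ need not decay geometrically and one cannot argue finiteness by bounding $\sum_h P(\alpha_h=1)$ directly. The telescoping argument avoids this entirely, since it only sums the increments $P(\alpha_h=1) - P(\alpha_{h+1}=1)$, whose total is at most $1$ no matter how many epochs are idle.
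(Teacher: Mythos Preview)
Your proof is correct but takes a genuinely different route from the paper. The paper argues by first stripping out empty epochs, then observing that only finitely many non-empty plans exist, so there is a uniform bound on the per-epoch reward and a uniform $\rho'<1$ on the survival probability of any non-empty epoch; the series is then dominated by a geometric series with ratio $\rho'$. Your argument instead exploits the identity $r_{h,j}\psi_{h,j}=\gamma_{h,j}(\bar\rho_{h,j-1}-\bar\rho_{h,j})$ to bound each epoch's reward by $\gamma_{\max}(1-\bar\rho_{h,q_h})$, and then telescopes $\sum_h(1-\bar\rho_{h,q_h})P(\alpha_h=1)=\sum_h\bigl[P(\alpha_h=1)-P(\alpha_{h+1}=1)\bigr]\le 1$.

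The paper's approach is more elementary in that it needs no algebraic identity, but it has to take the detour of removing empty epochs to force $\rho'<1$. Your telescoping handles empty epochs automatically and delivers the explicit, essentially sharp bound $E(\mathcal{C})\le\gamma_{\max}$, which dovetails nicely with Corollary~\ref{cor:optimal_IHRSPD}: there the optimal value is shown to be exactly $\gamma_{\max}-\theta$. Both arguments rest on the same implicit hypothesis that every $\rho_j<1$ (otherwise a risk-free package yields infinite reward); you make that explicit, the paper leaves it tacit in the claim $\rho'<1$.
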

\begin{proof}
Consider any package delivery plan $\mathcal{C}$ whose expected reward is:
\begin{equation}
    E(\mathcal{C}) = \sum_{h = 1}^{\infty} E(\mathcal{C}_{h} | \alpha_h = 1 )P(\alpha_h = 1).
    \label{eq:finite_expected_reward_1}
\end{equation}
First, note that if the plan $\mathcal{C}$ only specifies package deliveries for a finite number of epochs, then the expected reward is trivially finite. Thus, we focus on plans $\mathcal{C}$ that specify deliveries over an infinite number of epochs.  

If for any $\mathcal{C}_h \in \mathcal{C}$, $\mathcal{C}_h = \emptyset$, then $E(\mathcal{C}_{h} | \alpha_h = 1 ) = 0$ and $P(\alpha_{h+1} = 1 | \alpha_{h} = 1) = 1$. This implies that the infinite summation (\ref{eq:finite_expected_reward_1}) does not change if we remove $\mathcal{C}_h$ from $\mathcal{C}$ when $\mathcal{C}_h = \emptyset$. Therefore, we can restrict attention to plans $\mathcal{C}$ which are composed of non-empty package delivery plans for each epoch.

The dispatcher can only choose from a finite number of non-empty package delivery plans for each epoch $h$, i.e., $\mathcal{C}_h \in 2^\mathcal{T}$. By (\ref{eq:RSPD_1epoch_expectation}), the set of all possible values $\mathcal{E}_h$ for the expected reward $E(\mathcal{C}_h|\alpha_h=1)$ during epoch $h$ is also finite and this set does not change with each epoch, i.e., $\mathcal{E}_1 = \mathcal{E}_2 = ...$. By the same argument, the set of possible probabilities of success $\xi_h$ for epoch $h$ also has a finite size and does not change with each epoch. Suppose we knew both the package delivery plan $\mathcal{C}^*_h \in 2^\mathcal{T}$ that maximizes $E(\mathcal{C}^*_h|\alpha_h=1)$ such that $\forall \mathcal{C}_h \in 2^\mathcal{T} , E(\mathcal{C}_h|\alpha_h=1) \leq E(\mathcal{C}^*_h|\alpha_h=1)$ and the package delivery plan $\mathcal{C}'_h$ that maximizes the probability of success $\rho'$ such that $\forall \rho \in \xi_h, \rho \leq \rho'< 1$.

For every $\mathcal{C}_{h} \in \mathcal{C}$, the expected reward $E(\mathcal{C}_{h} | \alpha_h = 1 ) \leq E(\mathcal{C}^*_h|\alpha_h=1)$. Likewise, for every $\mathcal{C}_{h} \in \mathcal{C}$, $P(\alpha_h = 1 | \alpha_{h-1} = 1) \leq \rho'$. By swapping $E(\mathcal{C}_{h} | \alpha_h = 1 )$ for every epoch $E(\mathcal{C}^*_h|\alpha_h=1)$ in (\ref{eq:finite_expected_reward_1}) along with swapping $P(\alpha_h = 1 | \alpha_{h-1} = 1) $ for $ \rho'$ in every epoch when computing $P(\alpha_h = 1)$ in (\ref{eq:finite_expected_reward_1}), we obtain the following upper bound on (\ref{eq:finite_expected_reward_1}) for any package delivery plan $\mathcal{C}$:
\begin{equation*}
    E(\mathcal{C}) \leq \frac{E(\mathcal{C}^*_h|\alpha_h=1)}{1 - \rho'} < \infty. 
\end{equation*}
\end{proof}
Now that we have shown that every instance of IHRSPD has a finite expected reward, we will introduce the Markov Decision Process (MDP) formalism to prove that the optimal package delivery plan is stationary. To show this, we will define a MDP isomorphic to an instance of IHRSPD. Then, using the main result from \cite{10.2307/3690506}, we will show that the optimal expected reward for the MDP and IHRSPD requires choosing the same package delivery plan for each epoch, i.e., a stationary package delivery plan. Now, we provide the following MDP formalism which will likely extend beyond IHRSPD and into many other variations of multi-epoch risk aware task allocation problems. 

\begin{definition} [Markov Decision Process]
 A Markov Decision Process (MDP) is specified by the set $H=(S,A,\{A(x)\},R,P)$ composed of the following structures. The state-space $S$ is a denumerable set, i.e., at most bijective to the natural numbers $\mathbb{N}$. The action set $A$ is a metric space containing all possible actions for every state $x\in S$. For each state $x \in S$, there exists a compact, action set $A(x) \subset A$ of possible actions while in state $x$. $R$ is a reward function defined for each pair of states and actions $\{(x,\mathbf{a}) | x \in S, \mathbf{a} \in A(x)\}$. Finally, the control transition probabilities are specified by a probability function $P = [p_{xy}(\cdot)]$ that defines the probability of transitioning from state $x \in S$ to state $y \in S$, under a given action.
\end{definition}

Every MDP begins in an initial state $X_0$ and progresses to future states in the following manner. Given any current state $X_h = x \in S$, a controller follows some policy $\pi$ that specifies a desired action $A_h = \pi(X_h = x)$ based on the current epoch $h$ and the state $X_h$. Then, a reward $R(X_h,A_h)$ is earned and the state transitions to a new state $X_{h+1} = y \in S$ with probability $p_{xy}(A_h)$. This process repeats indefinitely. Given that this process proceeds indefinitely, if we consider all possible policies $\mathcal{P}$ for a fixed set of reward and probability functions, then there exists an expected reward for every policy $\pi \in \mathcal{P}$ over an infinite horizon.

\begin{definition}[Expected Total Reward for Infinite-Horizon MDP]
Consider a MDP and some policy $\pi$. Then, the expected total reward for the MDP beginning at state $X_0 = x \in S$ under policy $\pi$ is defined as:
\begin{equation}
    V(x,\pi)=E^\pi_x\left[\sum_{h=1}^\infty R(X_h,A_h) \right].
    \label{eq:e_MDP_total}
\end{equation}
\end{definition}

We will construct an isomorphic MDP by following the definition of IHRSPD. Firstly, the agent begins in an alive state $x_s$ and the dispatcher chooses a combination of packages for the agent to deliver while alive. To capture this behavior, construct a MDP whose initial state $x_s$ corresponds to the agent being alive. While the agent is in the alive state, the dispatcher chooses a combination of packages from the set of all packages $\mathcal{T}$. This choice is captured by a policy $\pi$ choosing an action vector $\mathbf{a}$ from the action space $A$, where $A$ is the set of all n-bit strings $A = \{0,1\}^n$ and $n = |\mathcal{T}|$ is the number of tasks in the instance of IHRSPD. For each action vector $\mathbf{a} \in A$, the dispatcher treats each component $a_i$ of $\mathbf{a} \in A$ as a bit that determines if package $t_i \in \mathcal{T}$ is taken for the current epoch $h$. We can define $\mathcal{C}_h$ to be the associated package delivery plan for action vector $\mathbf{a} = A_h$ where the order of the deliveries is given by Lemma \ref{lem:ordering_of_cycles}. Given that the agent attempts to deliver the packages in $\mathcal{C}_h$, there exists $|\mathcal{C}_h| + 1$ chances for the agent to fail in between receiving rewards, i.e., before the first package delivery, between any two package deliveries, or after the last package delivery. Furthermore, given the action vector $\mathbf{a}$, we define the set $\mathcal{F}(\mathbf{a})$ as the set of all n-bit strings that specify the subsets of packages the agent delivered before failure. Then, for each chance of failing, we specify a state $x_{0,\mathbf{f}}$ where $\mathbf{f} \in \mathcal{F}(\mathbf{a})$ specifies the subset of packages the agent was able to deliver before failure. Then, the probability of delivering only the subset of packages specified by $\mathbf{f}$ given the agent attempted to deliver all the packages specified by $\mathbf{a}$ can be captured by a probability function $\phi(\mathbf{f}|\mathbf{a})$.

For all $\mathbf{f} \in \mathcal{F}(\mathbf{a})$ such that $ \mathbf{f} \ne \mathbf{a}$, let $W_j$ be the last package completed by the agent before failing as specified by $\mathbf{f}$. Then, $\phi(\mathbf{f}|\mathbf{a}) = \psi_j(1 - \rho_j\rho_{j+1}) = \psi_j - \psi_{j+1}$ where $\psi_j$ is the probability of completing cycle $W_j$ and $\rho_j$ and $\rho_{j+1}$ are the probabilities of successfully traversing between $t_j$ and the depot, and traversing between the depot and $t_{j+1}$, respectively. If $\mathbf{f} = \mathbf{0}$, then $\phi(\mathbf{f}|\mathbf{a}) = (1 - \rho_{1})$ where $\rho_{1}$ is the probability of delivering the first package specified by $\mathbf{a}$. However, if $\mathbf{f} = \mathbf{a} $, then $\phi(\mathbf{f}|\mathbf{a}) = \psi_{q_h}(1 - \rho_{q_h})$ where $\psi_{q_h}$ is the probability of delivering the last package $t_{q_h}$ and $(1 - \rho_{q_h})$ is the probability of failing between $t_{q_h}$ and the depot. Naturally, if $\mathbf{f} \not \in \mathcal{F}(\mathbf{a})$, then $\phi(\mathbf{f}|\mathbf{a}) = 0$. For the case where the agent delivers all the packages and returns to the depot, we specify the state $x_{1,\mathbf{a}}$ and the probability $\bar{\rho}_{\mathbf{a}} = \bar{\rho}_{q_h}$ where $\bar{\rho}_{q_h}$ is the probability of finishing the last cycle $W_{q_h}$ and returning to the depot. 

\begin{definition}[MIHRSPD Transition Probability Functions]
For all $\mathbf{a} \in A$ and $\mathbf{f} \in \mathcal{F}(\mathbf{a})$,
\begin{equation*}
    p_{x_s,x_{0,\mathbf{f}}}(\mathbf{a}) = \phi(\mathbf{f}|\mathbf{a}) 
\end{equation*}
\begin{equation*}
    p_{x_s,x_{1,\mathbf{a}}} = \bar{\rho}_{\mathbf{a}} 
\end{equation*}
\begin{equation*}
    p_{x_{0,\mathbf{f}},x_d} = p_{x_{1,\mathbf{a}},x_s} = 1.
\end{equation*}
For all other combinations of states $x,y \in S$,
\begin{equation*}
    p_{x,y} = 0.
\end{equation*}
\label{def:MIHRSPD_P_Transition_Functions}
\end{definition}

Using the same conventions, we can construct reward functions given different outcomes in IHRSPD. Namely, if the agent fails while delivering the packages specified by $\mathbf{a}$, then it will receive a partial reward dependent on $x_{0,\mathbf{f}}$. Namely, it will receive the sum of all rewards for the packages it was able to deliver, but then incur a cost $\theta$ for failure. Likewise, if the agent does not fail, i.e.,  $X_h = x_{1,\mathbf{f}}$, then it will receive the sum of all rewards for the packages specified by $\mathbf{f}$ and return to the alive state $x_s$. This is captured by the following reward functions.

\begin{definition}[MIHRSPD Reward Functions]
For all $\mathbf{a} \in A$ and $\mathbf{f} \in \mathcal{F}(\mathbf{a})$,
\begin{equation*}
    R(x_{0,\mathbf{f}}) = \sum_{f_j \in \mathbf{f} : f_j = 1}r_j - \theta
\end{equation*}
\begin{equation*}
    R(x_{1,\mathbf{f}}) = \sum_{f_j \in \mathbf{f} : f_j = 1}r_j.
\end{equation*}
For all other states $x \in S$,
\begin{equation*}
    R(x) = 0.
\end{equation*}
\label{def:MIHRSPD_Reward_Functions}
\end{definition}

Given these probability and reward functions, we can properly define the following MDP which is isomorphic to IHRSPD by definition.

\begin{definition}[Markov-IHRSPD (MIHRSPD)]
An instance of MIHRSPD is given by the set $\mathcal{M} = \{\mathcal{I},H\}$ where $\mathcal{I}$ is an instance of IHRSPD and $H = (S,A,\{A(x)\},R,P)$ is a MDP defined as follows. Define the state-space $S = \{x_s,x_d\} \cup \{ x_i | \forall i \in \{0,1\}\times2^{\mathcal{T}} \}$. Let the action space $A$ be the set of all n-bit strings $A = \{0,1\}^n$, where $n$ is the number of tasks $|\mathcal{T}|$ in $\mathcal{I}$. Define the compact action spaces $\forall x \in S : A(x) = A$. Let the reward functions and probability transition functions be specified by $\mathcal{I}$ using Definition \ref{def:MIHRSPD_Reward_Functions} and Definition \ref{def:MIHRSPD_P_Transition_Functions}. Then, the problem is to find a policy $\pi$ in the set of all policies $\mathcal{P}$ that solves

\begin{equation}
    \text{Find   } \text{arg}\max_{\pi \in \mathcal{P}}E^\pi_x\left[\sum_{h=1}^\infty R(X_h,A_h) \right].
    \label{eq:MIHRSPD_expected_reward}
\end{equation}
\label{def:MIHRSPD}
\end{definition}

Using the main result from \cite{10.2307/3690506}, if the MDP in Definition \ref{def:MIHRSPD} satisfies three simple properties, and every policy has a finite expected reward, i.e., $\forall x \in S$ and $\forall \pi \in \mathcal{P} \implies V(x,\pi) < \infty$, then the optimal policy is stationary, i.e., $\forall h \geq 0, \forall x \in S : X_h = X_{h+1} \implies \pi^*(X_h) = \pi^*(X_{h+1})$.

\begin{theorem} [Existence of Stationary Policies \cite{10.2307/3690506}]
Consider a MDP satisfying the following conditions.  
\begin{enumerate}
    \item For each $x,y \in S$, the mapping $a \mapsto p_{xy}(a)$ is continuous in $a \in A(x)$.
    \item For each $x \in S$, $a \mapsto R(x,a)\in[-\infty,\infty)$ is an upper semicontinuous function of $a \in A(x)$
    \item The number of positive recurrent classes $\mathbf{N}^\tau$ is a continuous function of $\tau \in \mathbb{F}$ where $\mathbb{F}$ is the set of all stationary policies.
\end{enumerate}
Then, there exists an optimal stationary policy $\pi^*$ that maximizes (\ref{eq:e_MDP_total}), i.e., $\forall h \geq 0, \forall x \in S : X_h = X_{h+1} \implies \pi^*(X_h) = \pi^*(X_{h+1})$.
\label{thm:stationary_policy}
\end{theorem}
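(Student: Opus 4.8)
The statement is essentially the central existence result of \cite{10.2307/3690506}, so my plan is to present the proof as an appeal to that reference, accompanied by a sketch of the argument rather than a full reconstruction. The argument naturally splits into two stages: first, show that among \emph{stationary} policies an optimal one exists; second, show that no (possibly history-dependent or randomized) policy can strictly improve on it, so that the stationary optimizer is optimal in the full policy class.

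For the first stage, I would view the expected total reward $\tau \mapsto V(\cdot,\tau)$ as a function on the space $\mathbb{F}$ of stationary policies. Condition (1), continuity of $a \mapsto p_{xy}(a)$, makes the one-step transition kernel depend continuously on the action; condition (2), upper semicontinuity of $a \mapsto R(x,a)$, makes the one-step reward upper semicontinuous; and compactness of each $A(x)$ lets one invoke a Weierstrass-type argument, state by state, to extract a maximizing action. The delicate point is that $V(x,\tau)$ is an infinite sum over the trajectory, so pointwise good behavior of the one-step data does not automatically transfer to $V$. This is exactly where condition (3) enters: continuity of the number of positive recurrent classes $\mathbf{N}^\tau$ rules out the pathology in which, along a sequence of stationary policies converging to a limit, probability mass (and hence reward) leaks into recurrent classes that appear or disappear in the limit, causing $V$ to jump. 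With (3) in force, $V(x,\cdot)$ is upper semicontinuous on the compact space of stationary policies and a maximizer $\pi^*$ exists; its stationarity is immediate, since the maximizing action depends only on the current state $x$, not the epoch $h$, which is precisely the conclusion $X_h = X_{h+1} \implies \pi^*(X_h) = \pi^*(X_{h+1})$.

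For the second stage, I would use the standing hypothesis $V(x,\pi)<\infty$ for all $x$ and all $\pi$ to justify a dynamic-programming comparison: condition on the first state visited, peel off one reward, and bound the continuation value of an arbitrary policy by $V(\cdot,\pi^*)$. Finiteness is what makes this peeling legitimate (no $\infty-\infty$ indeterminacies), and an induction or limiting argument over the horizon then yields $V(x,\pi)\le V(x,\pi^*)$ for every $\pi$, establishing optimality of $\pi^*$.

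The step I expect to be the main obstacle is the transfer of semicontinuity from the one-step data to the infinite-horizon value $V$: the expected total reward criterion is notoriously ill-behaved under limits of policies, and condition (3) is the nontrivial ingredient that tames the recurrence structure. In our setting this difficulty is moot, since the next step is simply to verify conditions (1)--(3) for the finite-state, finite-action MDP of Definition \ref{def:MIHRSPD} — where all three hold trivially — with Lemma \ref{lem:IHRSPD_finite_expected_reward} supplying the required finiteness of the expected reward.
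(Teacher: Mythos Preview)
The paper does not prove this theorem at all: Theorem~\ref{thm:stationary_policy} is stated purely as a citation of the main result of \cite{10.2307/3690506}, with no accompanying proof or sketch. Your plan to present it as an appeal to that reference is therefore exactly what the paper does, and the two-stage sketch you supply (existence of a maximizer among stationary policies via compactness and upper semicontinuity, then optimality over all policies via a dynamic-programming comparison) goes strictly beyond the paper's treatment. Your closing remark is also on target: the paper's only use of the theorem is to verify conditions (1)--(3) for the finite MDP of Definition~\ref{def:MIHRSPD} in Lemma~\ref{lem:MIHRSPD_optimal_stationary_policy}, with Lemma~\ref{lem:IHRSPD_finite_expected_reward} supplying finiteness.
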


 A positive recurrent class, as mentioned in Requirement 3 of Theorem \ref{thm:stationary_policy}, is a set of states which will continually be revisited throughout the MDP. Requirement 3 is satisfied if every stationary policy for a given MDP has a continuous number of positive recurrent classes. In the case of MIHRSPD, the number of recurrent classes is constant, i.e., $\mathbf{N}^\tau = 1$. We will prove $\mathbf{N}^\tau = 1$ in the following lemma and apply Theorem \ref{thm:stationary_policy} to show the optimal policy is stationary.

\begin{lemma}
By Theorem \ref{thm:stationary_policy}, the optimal policy for an instance of MIHRSPD is stationary. \label{lem:MIHRSPD_optimal_stationary_policy}
\end{lemma}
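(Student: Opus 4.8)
The plan is to verify that the MDP $H$ of Definition \ref{def:MIHRSPD} satisfies the three hypotheses of Theorem \ref{thm:stationary_policy} together with the finiteness requirement $V(x,\pi)<\infty$ for all $x\in S$ and all $\pi\in\mathcal{P}$, and then invoke the theorem directly. For the finiteness requirement I would exploit the isomorphism between MIHRSPD and IHRSPD that is built into Definition \ref{def:MIHRSPD}: a policy $\pi$ of $H$ induces a (possibly adaptive) dispatching strategy for IHRSPD, and $V(x_s,\pi)$ equals the expected reward of that strategy. The geometric upper bound derived in the proof of Lemma \ref{lem:IHRSPD_finite_expected_reward}, namely $E(\mathcal{C})\le E(\mathcal{C}^*_h | \alpha_h=1)/(1-\rho')$, uses only that each epoch's conditional reward and one-step survival probability are bounded above by their per-epoch maxima, which holds for \emph{any} strategy; hence $\sup_{\pi\in\mathcal{P}}V(x_s,\pi)<\infty$. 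For the other initial states, $V(x_{1,\mathbf{f}},\pi)$ equals $R(x_{1,\mathbf{f}})$ plus a continuation value from $x_s$, which is uniformly bounded by the above, while $V(x_{0,\mathbf{f}},\pi)$ and $V(x_d,\pi)$ consist of at most one bounded reward followed by absorption at $x_d$ with reward $0$. So $V(x,\pi)<\infty$ throughout $S$.

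Conditions 1 and 2 are essentially free. The action set is $A=\{0,1\}^n$ with $n=|\mathcal{T}|<\infty$, so each $A(x)=A$ is a finite, hence discrete, metric space; every real-valued function on a discrete metric space is continuous, so $a\mapsto p_{xy}(a)$ is continuous for all $x,y\in S$ (Condition 1) and $a\mapsto R(x,a)$ is continuous, in particular upper semicontinuous (Condition 2). Indeed the reward functions of Definition \ref{def:MIHRSPD_Reward_Functions} do not depend on $a$ at all.

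Condition 3 is the substantive step: I would show $\tau\mapsto\mathbf{N}^\tau$ is continuous on the space $\mathbb{F}$ of stationary policies by analyzing the induced chain. Fix $\tau$ with $\tau(x_s)=\mathbf{a}$. From $x_s$ the chain moves to some failure state $x_{0,\mathbf{f}}$ or to the success state $x_{1,\mathbf{a}}$; each $x_{0,\mathbf{f}}$ goes to the absorbing dead state $x_d$ with probability one, and $x_{1,\mathbf{a}}$ returns to $x_s$ with probability one. Thus $x_d$ is always a positive recurrent class, every $x_{0,\mathbf{f}}$ is transient, and whenever $\mathbf{a}$ selects a package with $\rho_j<1$ the chain leaves $\{x_s,x_{1,\mathbf{a}}\}$ forever with probability one, so those states are transient too and $\mathbf{N}^\tau=1$. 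The only degenerate case is a policy whose plan is empty or contains only packages with $\rho_j=1$, which would make $\{x_s,x_{1,\mathbf{a}}\}$ a second recurrent class; I would dispose of it either by restricting to non-empty plans (legitimate, since by the argument in the proof of Lemma \ref{lem:IHRSPD_finite_expected_reward} empty-epoch plans can be deleted without changing the objective), or, more robustly, by noting that $S$ and $A$ are finite, so $\mathbb{F}$ is a finite discrete space on which \emph{every} function, in particular $\mathbf{N}^\tau$, is continuous regardless of its values.

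With the three conditions and the finiteness requirement established, Theorem \ref{thm:stationary_policy} yields an optimal stationary policy for MIHRSPD, which is exactly the claim. The main obstacle I anticipate is precisely the corner-case bookkeeping in Condition 3 (degenerate or empty plans, and packages with $\rho_j=1$); everything else reduces to the finiteness and discreteness of the state and action sets together with Lemma \ref{lem:IHRSPD_finite_expected_reward}.
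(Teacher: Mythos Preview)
Your proposal is correct and follows essentially the same strategy as the paper: verify the three hypotheses of Theorem~\ref{thm:stationary_policy} plus finiteness of $V(x,\pi)$ via the isomorphism to IHRSPD and Lemma~\ref{lem:IHRSPD_finite_expected_reward}, then invoke the theorem. Your treatment of Condition~3 is in fact a bit more careful than the paper's: the paper simply asserts $\mathbf{N}^\tau=1$ for every stationary policy via the case split $\mathbf{a}\neq\mathbf{0}$ versus $\mathbf{a}=\mathbf{0}$, whereas you flag the degenerate empty-plan case (where $\{x_d\}$ and $\{x_s,x_{1,\mathbf{0}}\}$ are both absorbing, so one could argue $\mathbf{N}^\tau=2$) and resolve it cleanly by observing that $\mathbb{F}$ is finite and discrete, so $\tau\mapsto\mathbf{N}^\tau$ is automatically continuous regardless of its values.
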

\begin{proof}
Consider an instance $\mathcal{M} = \{\mathcal{I},H\}$ of MIHRSPD constructed from $\mathcal{I}$. One can easily verify that the reward functions in Definition \ref{def:MIHRSPD_Reward_Functions}, and the probability transition laws in Definition \ref{def:MIHRSPD_P_Transition_Functions} are continuous functions over $A$, which satisfies Requirements 1 and 2 of Theorem \ref{thm:stationary_policy}. For Requirement 3, if the agent attempts to take any positive number of packages as its stationary policy, i.e., $\mathbf{a} \ne \mathbf{0}$, the agent will eventually fail to deliver all its packages and transition to a state $x_{0,\mathbf{f}}$. After staying in the state $x_{0,\mathbf{f}}$ for one epoch, the state will automatically transition to the failure state, $x_d$, where the MDP will stay forever. Therefore, $x_d$ is considered both an absorbing state and a recurrent state because a given subset of actions will always eventually transition the state to $x_d$ (absorbing) and stay in the state forever (recurrent).  Likewise, if the action vector $\mathbf{a} = \mathbf{0}$ is always chosen as the stationary policy, then $x_s$ can also be considered an absorbing and recurrent state because the probability of transitioning back to $x_s$ is $1$. Then, for any given stationary policy, the number of recurrent classes $\mathbf{N}^\tau = 1$. Finally, using a similar proof as in Lemma \ref{lem:IHRSPD_finite_expected_reward}, one can show that the expected reward for $\mathcal{M}$ is finite because $\mathcal{M}$ is isomorphic to $\mathcal{I}$. Therefore, MIHRSPD will always satisfy Theorem \ref{thm:stationary_policy}, which implies the optimal policy is stationary. 
\end{proof}

\begin{corollary}
The optimal package delivery plan $\mathcal{C}$ for an instance of IHRSPD is stationary, i.e., $\forall \mathcal{C}_h \in \mathcal{C}: \mathcal{C}_h = \mathcal{C}_{h+1}$.
\end{corollary}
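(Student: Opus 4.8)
The plan is to establish the corollary by pulling the stationarity result for MIHRSPD (Lemma~\ref{lem:MIHRSPD_optimal_stationary_policy}) back to IHRSPD through the isomorphism fixed in Definition~\ref{def:MIHRSPD}. An optimal package delivery plan $\mathcal{C}^* = \{\mathcal{C}^*_1, \mathcal{C}^*_2, \ldots\}$ for IHRSPD corresponds epoch by epoch to a policy $\pi^*$ for the associated MDP: the action $\pi^*$ selects at the alive state $x_s$ during epoch $h$ is the $n$-bit string $\mathbf{a}$ whose support indexes $\mathcal{C}^*_h$, with the intra-epoch order of deliveries supplied by Lemma~\ref{lem:ordering_of_cycles}. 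Once I argue that this correspondence preserves the objective value, the stationarity of an optimal $\pi^*$ forces the selected string to be the same $\mathbf{a}^*$ in every epoch, hence $\mathcal{C}^*_h = \mathcal{C}^*_{h+1}$ for all $h$.

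First I would make the objective correspondence explicit. Playing action $\mathbf{a}$ at $x_s$ yields, over the next two MDP steps, expected reward $\sum_{\mathbf{f}} \phi(\mathbf{f}\mid\mathbf{a})\left(\sum_{j : f_j = 1} r_j - \theta\right) + \bar{\rho}_{\mathbf{a}} \sum_{j : a_j = 1} r_j$; using $\sum_{\mathbf{f}} \phi(\mathbf{f}\mid\mathbf{a}) = 1 - \bar{\rho}_{\mathbf{a}}$ together with the definition of $\phi$, this telescopes to exactly $E(\mathcal{C}_h \mid \alpha_h = 1)$ from Equation~(\ref{eq:RSPD_1epoch_expectation}), while the probability of returning to $x_s$ (rather than being absorbed at $x_d$) is $\bar{\rho}_{\mathbf{a}} = \bar{\rho}_{h,q_h}$, matching the continuation factor in Equation~(\ref{eq:RSPD_q_h}). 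Summing over epochs and invoking the law of total expectation, exactly as in Equations~(\ref{eq:total_expectation}) and~(\ref{eq:sub_RSPD_general_form}), shows that the MDP objective $V(x_s,\pi)$ of~(\ref{eq:e_MDP_total})--(\ref{eq:MIHRSPD_expected_reward}) equals $E(\mathcal{C})$ in~(\ref{eq:IHRSPD}) for the corresponding plan; both are finite by Lemma~\ref{lem:IHRSPD_finite_expected_reward}. Consequently a maximizer of one is a maximizer of the other.

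Next I would note that $x_s$ is the only state at which the chosen action affects either the trajectory or the reward: from every $x_{0,\mathbf{f}}$ the chain moves deterministically to the absorbing state $x_d$, from every $x_{1,\mathbf{a}}$ it moves deterministically back to $x_s$, and the rewards at those states depend only on the state. Since the agent visits $x_s$ exactly once per epoch for as long as it survives, a stationary policy is precisely a policy that plays one fixed string $\mathbf{a}^*$ at $x_s$ in every epoch. Lemma~\ref{lem:MIHRSPD_optimal_stationary_policy} supplies such an optimal stationary policy, and mapping it back through the isomorphism (using that Lemma~\ref{lem:ordering_of_cycles} makes the ordered plan a deterministic function of the action string) yields an optimal plan with $\mathcal{C}^*_h = \mathcal{C}^*_{h+1}$ for every $h$.

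I expect the main obstacle to be bookkeeping rather than anything conceptual: one must verify carefully that the two-step MDP ``epoch'' reproduces both the per-epoch expected reward and the survival probability of IHRSPD with no double counting, so that the objectives~(\ref{eq:IHRSPD}) and~(\ref{eq:MIHRSPD_expected_reward}) genuinely coincide under the action-to-plan map, and that the MDP notion of a stationary policy (the same action in a recurring state) faithfully translates to the intended notion of a stationary plan (the same ordered set of deliveries each epoch) --- which requires observing that the action taken at the auxiliary states $x_{0,\mathbf{f}}$, $x_{1,\mathbf{a}}$, $x_d$ is immaterial. Everything else is an immediate appeal to Lemma~\ref{lem:MIHRSPD_optimal_stationary_policy}.
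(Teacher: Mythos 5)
Your proposal follows exactly the paper's route: invoke Lemma~\ref{lem:MIHRSPD_optimal_stationary_policy} to get a stationary optimal policy for the MDP and transfer it back to IHRSPD through the isomorphism of Definition~\ref{def:MIHRSPD}. The paper's own proof is a two-sentence appeal to that isomorphism; your additional bookkeeping (matching the two-step MDP reward to $E(\mathcal{C}_h\mid\alpha_h=1)$ and noting that only the action at $x_s$ matters) makes explicit what the paper leaves to the phrase ``by definition isomorphic,'' and is correct.
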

\begin{proof}
By Lemma \ref{lem:MIHRSPD_optimal_stationary_policy}, the optimal policy for a MDP $\mathcal{M}$ constructed from instance of IHRSPD $\mathcal{I}$ is stationary. Because $\mathcal{M}$ is by definition isomorphic to $\mathcal{I}$, the optimal package delivery plan for $\mathcal{I}$ is stationary.
\end{proof}

Given we know the optimal package delivery plan is stationary, we will now find the optimal stationary package delivery plan. To do this, we extend the notion of reward-to-risk ratio to apply to package delivery plans using epoch risk ratios.

\begin{definition}[Epoch Risk Ratio]
Given a package delivery plan $\mathcal{C}_h$ for epoch $h$ during an instance of RSPD or IHRSPD, let $\bar{\rho}_{q_h}$ be the probability of finishing the last cycle $W_{q_h}$ in $\mathcal{C}_h$. Then, we denote the epoch risk ratio for $\mathcal{C}_h$ as $\epsilon_h$ and compute it by:
\begin{equation*}
   \epsilon_h = \frac{E(\mathcal{C}_{h} | \alpha_h = 1 )}{1 - \bar{\rho}_{q_h}}
\end{equation*}
\end{definition}

Given the package delivery plan is stationary, the following lemma characterizes the expected reward using the epoch risk ratio.

\begin{lemma}
Consider an instance of IHRSPD with a stationary package delivery plan, i.e., $\mathcal{C}_{1} = \mathcal{C}_2 = \ldots $. Then, the expected reward for IHRSPD is given by the epoch risk ratio for the package delivery plan for any epoch $\mathcal{C}_{h}$:
\begin{equation*}
    \frac{E(\mathcal{C}_{h} | \alpha_1 = 1 )}{1 - \bar{\rho}_{q_h}}. 
\end{equation*}
\label{lem:IHRSPD_stationary_plan}
\end{lemma}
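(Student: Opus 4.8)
The plan is to exploit stationarity to collapse the telescoping product in \eqref{eq:RSPD_q_h} into a pure geometric factor, and then sum the resulting geometric series in \eqref{eq:total_expectation}.

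First I would fix notation. Since the plan is stationary, $\mathcal{C}_h = \mathcal{C}_1$ for every $h$, so the per-epoch conditional reward $E(\mathcal{C}_h \mid \alpha_h = 1)$ takes a single common value — the quantity written $E(\mathcal{C}_h \mid \alpha_1 = 1)$ in the statement, which I will abbreviate $\bar{E}$ — and the probability of completing the last cycle is likewise a single common value $\bar{\rho} \triangleq \bar{\rho}_{q_h}$, independent of $h$. Substituting the constant $\bar{\rho}$ into \eqref{eq:RSPD_q_h} gives $P(\alpha_h = 1) = \prod_{l=1}^{h-1} \bar{\rho} = \bar{\rho}^{\,h-1}$ for $h \ge 1$, with the empty product equal to $1$ at $h=1$, consistent with $P(\alpha_1 = 1) = 1$.

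Next I would substitute these into the total-expectation expression \eqref{eq:total_expectation}, obtaining
\[
E(\mathcal{C}) \;=\; \sum_{h=1}^{\infty} E(\mathcal{C}_h \mid \alpha_h = 1)\, P(\alpha_h = 1) \;=\; \bar{E} \sum_{h=1}^{\infty} \bar{\rho}^{\,h-1}.
\]
It then remains only to evaluate this geometric series. I would dispose of the degenerate case $\mathcal{C}_h = \emptyset$ first: there $\bar{E} = 0$, so $E(\mathcal{C}) = 0$, and (as already observed in the proof of Lemma~\ref{lem:IHRSPD_finite_expected_reward}) such epochs can be discarded, leaving a non-empty stationary plan. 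For a non-empty plan, since every delivered package carries strictly positive per-leg failure risk, the plan contains at least one cycle $W_j$ with $\rho_j < 1$, so $\bar{\rho} = \prod_j \rho_j^2 < 1$ — this is exactly the bound $\rho' < 1$ invoked in Lemma~\ref{lem:IHRSPD_finite_expected_reward}. With $0 \le \bar{\rho} < 1$ the series converges to $\tfrac{1}{1-\bar{\rho}}$, yielding $E(\mathcal{C}) = \tfrac{\bar{E}}{1-\bar{\rho}} = \tfrac{E(\mathcal{C}_h \mid \alpha_1 = 1)}{1 - \bar{\rho}_{q_h}}$, which is precisely the epoch risk ratio of $\mathcal{C}_h$.

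The computation itself is a one-line geometric sum, so the only real subtlety — and the step I would be most careful about — is the convergence bookkeeping: justifying $\bar{\rho}_{q_h} < 1$ for the plans that matter (non-empty ones, under the standing assumption of strictly positive per-leg failure risk) and cleanly handling the trivial $\mathcal{C}_h = \emptyset$ case so that the quotient $\bar{E}/(1-\bar{\rho})$ is well-defined.
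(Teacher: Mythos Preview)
Your proof is correct and follows essentially the same approach as the paper: both exploit stationarity to make the per-epoch expectation and survival probability constant, reduce $P(\alpha_h=1)$ to $\bar{\rho}^{\,h-1}$, and sum the resulting geometric series. Your treatment is in fact slightly more careful than the paper's, which simply writes down the geometric sum without explicitly separating off the empty-plan case or justifying $\bar{\rho}_{q_h}<1$.
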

\begin{proof}
Because the package delivery plans for each epoch are equal, $\mathcal{C}_{1} = \mathcal{C}_2 = \ldots$, the expected reward for each epoch (\ref{eq:RSPD_1epoch_expectation}) will also be equal, i.e., $E(\mathcal{C}_{1} | \alpha_1 = 1) = E(\mathcal{C}_{2} | \alpha_2 = 1) = \ldots $. Furthermore, the last cycles in each package delivery plan will also be equal $q_1 = q_2 = ... = p$ for some $p$. By applying the same assumption to (\ref{eq:RSPD_q_h}), we can see that $P(\alpha_h = 1) = (\bar{\rho}_{q_h})^{h-1} = (\bar{\rho}_{p})^{h-1}$. Given (\ref{eq:IHRSPD}), we can factor out the expectation $E(\mathcal{C}_{1} | \alpha_1 = 1)$ from every epoch and rewrite (\ref{eq:IHRSPD}) to be a geometric series which yields the epoch risk ratio. More specifically, we have

\begin{align}
    \sum_{h = 1}^{\infty} E(\mathcal{C}_{h} | \alpha_h = 1 )P(\alpha_h = 1) &= E(\mathcal{C}_{1} | \alpha_1 = 1 ) \sum_{h = 1}^{\infty} (\bar{\rho}_p)^{(h-1)} \nonumber \\ & = \frac{E(\mathcal{C}_{1} | \alpha_1 = 1 )}{1 - \bar{\rho}_{p}}. \nonumber
\end{align}
\end{proof}

\begin{lemma}
The epoch risk ratio $\epsilon_h$ for a package delivery plan $\mathcal{C}_h$ during epoch $h$ is maximal when $\mathcal{C}_h = \{W_{\text{max}}\}$ where $W_{\text{max}}$ is the package delivery with the maximal epoch risk ratio, $\epsilon_{\text{max}}$. \label{lem:optimal_package_delivery_horizon}
\end{lemma}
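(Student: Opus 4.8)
The plan is to reduce the claim to a weighted-average (mediant) inequality on the per-package reward-to-risk ratios $\gamma_{h,j}$ from Definition~\ref{def:reward_to_risk_ratio}. First I would substitute the one-epoch expected reward (\ref{eq:RSPD_1epoch_expectation}) into the definition of $\epsilon_h$ to obtain
\[
\epsilon_h \;=\; \frac{\sum_{j=1}^{q_h} r_{h,j}\psi_{h,j}}{1-\bar{\rho}_{h,q_h}} \;-\; \theta ,
\]
so it suffices to show that the fraction $F(\mathcal{C}_h) \triangleq \bigl(\sum_{j=1}^{q_h} r_{h,j}\psi_{h,j}\bigr)\big/\bigl(1-\bar{\rho}_{h,q_h}\bigr)$ is maximized, over non-empty plans, by the singleton plan containing the package with the largest reward-to-risk ratio.

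Next I would exploit a telescoping identity. Writing $\bar{\rho}_{h,j}=\prod_{l=1}^{j}\rho_{h,l}^{2}$ with $\bar{\rho}_{h,0}=1$, the denominator satisfies $1-\bar{\rho}_{h,q_h}=\sum_{j=1}^{q_h}\bigl(\bar{\rho}_{h,j-1}-\bar{\rho}_{h,j}\bigr)=\sum_{j=1}^{q_h}\bar{\rho}_{h,j-1}\bigl(1-\rho_{h,j}^{2}\bigr)$. Since $\psi_{h,j}=\bar{\rho}_{h,j-1}\rho_{h,j}$, setting $w_j\triangleq \bar{\rho}_{h,j-1}\bigl(1-\rho_{h,j}^{2}\bigr)\ge 0$ gives $r_{h,j}\psi_{h,j}=w_j\,\gamma_{h,j}$, so that $F(\mathcal{C}_h)=\bigl(\sum_j w_j\gamma_{h,j}\bigr)\big/\bigl(\sum_j w_j\bigr)$ is a convex combination of the individual ratios $\gamma_{h,j}$. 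Hence $F(\mathcal{C}_h)\le \max_j \gamma_{h,j}\le \gamma_{\text{max}}$, where $\gamma_{\text{max}}$ is the largest reward-to-risk ratio over all packages in $\mathcal{T}$, and therefore $\epsilon_h\le \gamma_{\text{max}}-\theta$.

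Finally I would check attainment: for a singleton plan $\mathcal{C}_h=\{W_j\}$ we have $\bar{\rho}_{h,q_h}=\rho_j^{2}$ and $E(\{W_j\}\mid\alpha_h=1)=r_j\rho_j-\theta(1-\rho_j^{2})$, so its epoch risk ratio equals exactly $\gamma_j-\theta$; in particular $\{W_{\text{max}}\}$, the singleton built from the package with the largest $\gamma$, attains $\epsilon_{\text{max}}=\gamma_{\text{max}}-\theta$, which by the previous paragraph is the global maximum. This both proves the bound and identifies $W_{\text{max}}$ as the package of maximal reward-to-risk ratio. The only item needing care is well-definedness and degeneracy: the epoch risk ratio is defined only for non-empty plans; if some selected package has $\rho_{h,j}=1$ then $w_j=0$ and it simply drops out of both sums, while the fully degenerate case where every selected package is delivered with certainty must be excluded or handled separately (such a package dominates anyway). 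I expect the only mildly tricky step to be spotting the telescoping that turns the denominator into precisely the weight collection $\{w_j\}$; once that is in place, the result is a one-line averaging bound together with the singleton computation.
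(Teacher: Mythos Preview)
Your proposal is correct and takes a genuinely different route from the paper. The paper argues by contradiction: it assumes some plan $\mathcal{C}^*_h$ has a strictly larger epoch risk ratio than the singleton $\{W_{\text{max}}\}$, cross-multiplies the resulting inequality, and interprets the two sides as the (reward-only) expected value of ``$\mathcal{C}^*_h$ followed by $W_{\text{max}}$'' versus ``$W_{\text{max}}$ followed by $\mathcal{C}^*_h$''; it then invokes Lemma~\ref{lem:ordering_of_cycles} to declare the first ordering suboptimal, yielding the contradiction. Your argument is instead direct: after peeling off the $-\theta$ term, you telescope $1-\bar{\rho}_{h,q_h}=\sum_j \bar{\rho}_{h,j-1}(1-\rho_{h,j}^{2})$, recognize $r_{h,j}\psi_{h,j}=w_j\gamma_{h,j}$ with $w_j=\bar{\rho}_{h,j-1}(1-\rho_{h,j}^{2})$, and conclude that $\epsilon_h+\theta$ is a bona fide weighted average of the constituent $\gamma_{h,j}$, hence bounded by $\gamma_{\text{max}}$ with equality at the singleton.

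What each buys: your mediant argument is self-contained (no reliance on Lemma~\ref{lem:ordering_of_cycles}), handles $\theta$ transparently, and yields the structural identity $\epsilon_h=\sum_j w_j\gamma_{h,j}\big/\sum_j w_j-\theta$, which is strictly more informative than the bare inequality. The paper's route reuses the ordering machinery already in place but is less direct, and it quietly sidesteps the case where $W_{\text{max}}$ already lies in $\mathcal{C}^*_h$ (so that ``appending $W_{\text{max}}$'' duplicates a package); your averaging argument has no such edge case. Your handling of the degeneracies ($\rho_{h,j}=1$ giving $w_j=0$, and the empty/all-certain plan) is also more careful than the paper's.
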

\begin{proof}
We will prove this by contradiction. Consider the optimal package delivery plan $\mathcal{C}^*_h$ whose epoch risk ratio is greater than $\epsilon_{\text{max}}$. However, for each individual package delivery in $W_i \in \mathcal{C}^*_h$, its reward-to-risk ratio is less than  $\gamma_\text{max}$ (the largest reward-to-risk ratio for any individual task). This implies
\begin{align}
    \frac{E(\mathcal{C}^*_h | \alpha_h = 1)}{1 - \bar{\rho}_{q_h}} &> \frac{E(\mathcal{C}_h | \alpha_h = 1)}{1 - (\rho_1)^2} \nonumber \\   
    \Rightarrow \frac{\sum_i{r_i\psi_i}}{1 - \bar{\rho}_{q_h}} &> \frac{r_1 \rho_1}{1 - (\rho_1)^2} \nonumber \\
    \Rightarrow  \sum_i{r_i\psi_i} + r_1\rho_1\bar{\rho}_{q_h} &> r_1\rho_1 + \sum_i{r_i\psi_i} (\rho_1)^2.
    \label{lem:eqfinal:optimal_package_delivery_horizon}
\end{align}

Inequality (\ref{lem:eqfinal:optimal_package_delivery_horizon}) claims that the expected reward for a package delivery plan where $W_{\text{max}}$ is delivered after all packages in $\mathcal{C}^*_h$ is optimal compared to delivering $W_{\text{max}}$ before all packages in $\mathcal{C}^*_h$. However, if $W_{\text{max}}$ has a higher reward-to-risk ratio than any package in $\mathcal{C}^*_h$, then Lemma \ref{lem:ordering_of_cycles} proves this is suboptimal. 
\end{proof}

\begin{corollary}
By an extension of Lemma \ref{lem:optimal_package_delivery_horizon}, the optimal stationary package delivery plan $\mathcal{C}$ that solves an instance $\mathcal{I}$ of IHRSPD is given by $\mathcal{C} = \{\forall h > 0 : \mathcal{C}_h = \{W_{\text{max}}\}\}$ where $W_{\text{max}}$ is the package delivery with highest reward-to-risk ratio in $\mathcal{I}$.
\label{cor:optimal_IHRSPD}
\end{corollary}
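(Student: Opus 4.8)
The plan is to chain together the three preceding results so that the infinite-horizon optimization collapses to a single-package choice. First, I would invoke the stationarity corollary established above (the consequence of Lemma~\ref{lem:MIHRSPD_optimal_stationary_policy}): an optimal package delivery plan $\mathcal{C}$ for an instance $\mathcal{I}$ of IHRSPD can be taken with $\mathcal{C}_1 = \mathcal{C}_2 = \cdots$. Hence it suffices to maximize over the finite family of stationary plans, which is parametrized by the single choice $\mathcal{C}_1 \in 2^{\mathcal{T}}$.

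Second, for any stationary plan I would apply Lemma~\ref{lem:IHRSPD_stationary_plan} to rewrite the objective~(\ref{eq:IHRSPD}) as the epoch risk ratio
\begin{equation*}
    \epsilon = \frac{E(\mathcal{C}_1 \mid \alpha_1 = 1)}{1 - \bar{\rho}_{q_1}} .
\end{equation*}
So maximizing the total expected reward over stationary plans is \emph{exactly} maximizing the epoch risk ratio over the choice of $\mathcal{C}_1$. Third, I would apply Lemma~\ref{lem:optimal_package_delivery_horizon}, which shows this epoch risk ratio is maximized by the singleton plan $\{W_{\text{max}}\}$. To match the statement of the corollary I would spell out that for a singleton cycle $W_j = \{t_0, t_j, t_0\}$ one has, by~(\ref{eq:RSPD_1epoch_expectation}) with $q_1 = 1$, $\bar{\rho}_{q_1} = \rho_j^2$, $\psi_{1,1} = \rho_j$, and by Definition~\ref{def:reward_to_risk_ratio},
\begin{equation*}
    \epsilon(\{W_j\}) = \frac{r_j \rho_j - \theta(1 - \rho_j^2)}{1 - \rho_j^2} = \frac{r_j \rho_j}{1 - \rho_j^2} - \theta = \gamma_j - \theta ;
\end{equation*}
since $\theta$ is a fixed offset, the singleton of largest epoch risk ratio is precisely the package delivery $W_{\text{max}}$ of largest reward-to-risk ratio $\gamma_{\text{max}}$ in $\mathcal{I}$. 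Combining the three steps yields $\mathcal{C}_h = \{W_{\text{max}}\}$ for every $h$, which is the claim.

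I do not expect a genuine obstacle here, since the content is carried entirely by the three lemmas; the remaining work is bookkeeping. The point I would be most careful about is the bridge just described between the ``maximal epoch risk ratio'' language of Lemma~\ref{lem:optimal_package_delivery_horizon} and the ``maximal reward-to-risk ratio'' language of this corollary; note that Lemma~\ref{lem:optimal_package_delivery_horizon}'s interchange argument (which rests on Lemma~\ref{lem:ordering_of_cycles}) already rules out every non-singleton plan, so nothing further is needed there. The one true edge case is $\gamma_{\text{max}} < \theta$: then $\{W_{\text{max}}\}$ has negative total reward while the empty stationary plan has total reward $0$, so the corollary should be read with the tacit hypothesis $\gamma_{\text{max}} \ge \theta$ (at least one package is worth delivering); equivalently, $\mathcal{C} = \{\,\mathcal{C}_h = \{W_{\text{max}}\}\,\}_{h > 0}$ is optimal among all non-empty stationary plans and is globally optimal whenever $\gamma_{\text{max}} \ge \theta$, which one verifies simply by comparing the two numbers $\gamma_{\text{max}} - \theta$ and $0$.
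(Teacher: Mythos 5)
Your proof is correct and follows exactly the chain the paper intends (the paper states this corollary without an explicit proof): stationarity of the optimal plan, then Lemma~\ref{lem:IHRSPD_stationary_plan} to reduce the objective to the epoch risk ratio, then Lemma~\ref{lem:optimal_package_delivery_horizon} to reduce to a singleton, with your identity $\epsilon(\{W_j\}) = \gamma_j - \theta$ supplying the bridge between ``maximal epoch risk ratio'' and ``maximal reward-to-risk ratio'' that the paper leaves implicit. Your caveat about $\gamma_{\text{max}} < \theta$ (where the empty plan beats $\{W_{\text{max}}\}$) is a genuine edge case the paper does not address here, although its finite-horizon algorithm does impose the analogous condition $\gamma_i > \theta$.
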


The above result shows that in the case of the infinite horizon package delivery problem, the optimal strategy is to deliver only the most valuable package in each epoch.  Thus, one can easily construct an $O(n)$ time algorithm to solve IHRSPD by searching for the package with greatest reward-to-risk ratio.

\addtolength{\textheight}{-3cm}   


\section{CONCLUSIONS AND FUTURE WORK}

In this work, we formulate the Risk-Aware Single Agent Delivery (RSPD) problem where a dispatcher assigns package deliveries to a single agent based on both the reward associated with the package and the probability of successfully surviving the delivery. If the agent fails mid-delivery, a cost is incurred and the agent cannot make future deliveries. Therefore, the dispatcher must weigh the reward from a delivery against the loss of all future rewards across multiple epochs of the problem. We solve variants of this problem, one where there is a finite number of epochs (finite horizon) and one where there is an infinite number of epochs (infinite horizon). For the finite horizon case, we provide an optimal algorithm with time-complexity $O(n \log n)$. For the infinite horizon problem, we construct an isomorphic Markov Decision Process (MDP) to prove the optimal package delivery plan is to deliver the same package forever. Finding the optimal solution to the infinite horizon case only takes $O(n)$ time.  Leveraging this connection to address multi-agent variants of our problem may be of interest for future work.

\section{ACKNOWLEDGMENTS}

We would like to acknowledge Lintao Ye for helpful discussions on greedy algorithms.


\bibliographystyle{abbrv}
\bibliography{references}

\end{document}